\documentclass{my-aims}

\usepackage{amsmath}
\usepackage{paralist}
\usepackage{graphics}
\usepackage{graphicx}
\usepackage{subfig}
\usepackage[colorlinks=true, urlcolor=blue, citecolor=red]{hyperref}


\textheight=8.2 true in
\textwidth=5.0 true in
\topmargin 30pt
\setcounter{page}{1}



\newtheorem{theorem}{Theorem}[section]

\theoremstyle{definition}


\def\R{\mathbb{R}}


\title[Optimal control of a delayed HIV model]{Optimal control of a delayed HIV model}

\author[F. Rodrigues, C. J. Silva, D. F. M. Torres and H. Maurer]{}

\subjclass{Primary: 34C60, 49K15; Secondary: 92D30.}

\keywords{HIV, intracellular and pharmacological time delays, stability,
qualitative investigation and simulation of models, optimal control.}

\email{fmgrodrigues@ua.pt}
\email{cjoaosilva@ua.pt}
\email{delfim@ua.pt}
\email{maurer@math.uni-muenster.de}

\thanks{$^*$Corresponding author: delfim@ua.pt}


\begin{document}

\maketitle


\centerline{\scshape Filipe Rodrigues, Cristiana J. Silva, Delfim F. M. Torres$^*$}
\medskip
{\footnotesize
\centerline{Center for Research and Development in Mathematics and Applications (CIDMA)}
\centerline{Department of Mathematics, University of Aveiro, 3810-193 Aveiro, Portugal}}

\medskip

\centerline{\scshape Helmut Maurer}
\medskip
{\footnotesize
\centerline{Institute of Computational and Applied Mathematics}
\centerline{University of M\"{u}nster, D-48149 M\"{u}nster, Germany}
}

\bigskip


\begin{abstract}
We propose a model for the human immunodeficiency virus type~1 (HIV-1)
infection with intracellular delay and prove the local asymptotical
stability of the equilibrium points. Then we introduce a control function
representing the efficiency of reverse transcriptase inhibitors
and consider the pharmacological delay associated to the control.
Finally, we propose and analyze an optimal control problem
with state and control delays. Through numerical simulations,
extremal solutions are proposed for minimization
of the virus concentration and treatment costs.
\end{abstract}


\section{Introduction}
\label{IntroSection}

Infection by human immunodeficiency virus type~1 (HIV-1)
has many quantitative features \cite{PerelsonNelson1999}.
Mathematical models for HIV infection can provide insights
into the dynamics of viral load in vivo and may play a significant
role in the development of a better understanding of HIV/AIDS
and drug therapies \cite{ZhuZou:DelayHIV:DCDS:2009}.
Cytotoxic T lymphocytes (CTLs) play a critical role in antiviral
defense by attacking virus-infected cells. It is believed that
CTLs are the main host immune factor that determine virus load
\cite{Nowak:Bangham:HIV:Science:1996}. When HIV invades the body,
it targets the CD4+ T cells. These cells can be considered the
command centers of the immune system. The CTLs are cells that
set out to eliminate infection by killing infected cells \cite{CulshawRuanSpiteri2004}.
Several mathematical models have been proposed for HIV-1 infection with CTLs response:
see, e.g., \cite{ArnaoutNowakWodarzHIV2000,CulshawRuanSpiteri2004,%
Nowak:Bangham:HIV:Science:1996, WangWangLiuCMA2006} and references cited therein.

Time delay plays an important role in the dynamics of HIV infection.
Intracellular delay, that is, the delay between initial infection
of a cell by HIV and the release of new virions, was considered
in the models proposed by \cite{CulshawCD4Tdelay,Herz:HIV:delay:1996,%
Mittler:delay:AIDS:1999,Mittler:HIVdelay:Math:Bio:1998,NelsonMurray:delay:HIV:2000,%
NelsonPerelson:delay:HIV:2002,Tam:delay:1999,ZhuZou:DelayHIV:DCDS:2009}.
Here, we enrich the undelayed mathematical model proposed by \cite{Nowak:Bangham:HIV:Science:1996},
which considers the action of CTLs in the immune system, by introducing a discrete
time delay that represents an intracellular delay. State delays for such type of models 
have been already introduced, e.g., in \cite{Haffat:Yousfi:OCHIVdelay:2012}. 
However, in our case we also model the important pharmacological delay that occurs between
the administration of drug and its appearance within cells, due to the time required 
for drug absorption, distribution, and penetration into the target cells
\cite{PerelsonEtAllHIVdynamicsScience1996}. In the context of anticancer therapy, 
the idea to represent delay effects in drug kinetics and dynamics was presented 
in \cite{[3]} and developed in \cite{[4]}.

Optimal control is a branch of mathematics developed
to find optimal ways to control a dynamic system
\cite{Cesari_1983,Fleming_Rishel_1975,Pontryagin_et_all_1962}.
Optimal control theory has been applied with success to HIV models: see, e.g.,
\cite{CulshawRuanSpiteri2004,Haffat:Yousfi:OCHIVdelay:2012,%
Kirschner:Lenhart:OC:HIV:JMB:1996,MyID:355,MyID:318} 
and references cited therein. Here, we introduce a control function,
which represents the efficiency of reverse transcriptase inhibitors,
and consider a delay in the control function representing the pharmacological delay. 
Our aim is to determine the control function that minimizes the concentration
of virus and the treatment costs. To the best of our knowledge, this is the first time
an optimal control HIV problem with delay in state and control variables is investigated.

The paper is organized as follows. The model with intracellular delay is formulated 
in Section~\ref{Section:delay:model} and local stability is proved for any time delay.
In Section~\ref{sec:delay:OC:problem}, we introduce a control function in the
delayed model of Section~\ref{Section:delay:model} and analyze an optimal control
problem with intracellular and pharmacological delays. Section~\ref{sec:numerical}
is devoted to numerical simulations for the stability of the equilibrium points
and the computation of extremals for the optimal control problem with state and control delays.
We compare the extremal of our optimal control problem with state and control delays 
with the solutions of the uncontrolled problem and the control problem
with delay in the state variable only. We end with Section~\ref{sec:discussion},
where we discuss the established results.


\section{Intracellular delayed mathematical model}
\label{Section:delay:model}

In this section, we propose a delayed mathematical model for HIV-1 infection.
We consider the undelayed model proposed by \cite{Nowak:Bangham:HIV:Science:1996}
and introduce a discrete intracellular time delay. The model considers four state variables:
$Z(t)$ represents the concentration of uninfected cells, $I(t)$ represents
the concentration of infected cells, $V(t)$ represents the concentration
of free virus particles, and $T(t)$ represents the concentration of CTLs at time $t$.
The following assumptions are made to describe the cell dynamics
\cite{Nowak:Bangham:HIV:Science:1996}: uninfected cells are produced at a constant
rate $\lambda$, and die at a rate $m Z$. Infected cells are produced
from uninfected cells and free viruses at a rate $rVZ$ and die at rate $u I$
(the average lifetime of an infected cell is $1/u$). Free viruses are produced
from infected cells at rate $kI$ and declines at rate $vV$
(the average lifetime of a free virus particle is $1/v$). The rate
of CTLs proliferation in response to antigen is given by $a I T$.
In the absence of stimulation, CTLs decay at rate $n T$. Infected
cells are killed by CTLs at rate $s I T$.
The intracellular delay, $\tau$, represents the time needed for
infected cells to produce virions after viral entry 
\cite{Haffat:Yousfi:OCHIVdelay:2012,ZhuZou:DelayHIV:DCDS:2009},
called the eclipse phase \cite{Pawelek:HIV:2delays:MBio:2012}.
The model we propose is given by the following
system of ordinary differential equations:
\begin{equation}
\label{sistemaT}
\left\{
\begin{array}{lcr}
\dot{Z}(t)=\lambda-mZ(t)-rV(t)Z(t), \\
\dot{I}(t)=rV(t-\tau)Z(t-\tau)-uI(t)-sI(t)T(t), \\
\dot{V}(t)=kI(t)-vV(t), \\
\dot{T}(t)=aI(t)T(t)-nT(t).
\end{array}\right.
\end{equation}
The initial conditions for system \eqref{sistemaT} are
\begin{equation}
\label{eq:init:cond:moddelay}
Z(\theta) = \varphi_1(\theta), \quad I(\theta) = \varphi_2(\theta),
\quad V(\theta) = \varphi_3(\theta), \quad T(\theta) = \varphi_4(\theta),
\end{equation}
$-\tau \leq \theta \leq 0$, where $\varphi
=\left(\varphi_1, \varphi_2, \varphi_3, \varphi_4\right)^T \in C$
with $C$ the Banach space $C \left([-\tau, 0], {\mathbb{R}}^4 \right)$
of continuous functions mapping the interval $[-\tau, 0]$ into ${\mathbb{R}}^4$.
The usual local existence, uniqueness and continuation results apply
\cite{Hale_Lunel_book1993,YKuang_1993}. Moreover, from biological meaning,
we further assume that the initial functions are nonnegative:
\begin{equation}
\label{eq:nonneg:IC}
\varphi_i(\theta) \geq 0, \quad \text{for}  \quad
\theta \in [-\tau, 0], \quad i = 1, \ldots, 4.
\end{equation}
From \cite[Theorem 2.1]{ZhuZou:DelayHIV:DCDS:2009}, it follows that
all solutions of \eqref{sistemaT} satisfying \eqref{eq:init:cond:moddelay}
and \eqref{eq:nonneg:IC} are bounded for all time $t \geq 0$,
which ensures not only local existence
but the existence of a unique solution
$\left( Z(t), I(t), V(t), T(t)\right)$ of \eqref{sistemaT} with initial
conditions \eqref{eq:init:cond:moddelay}--\eqref{eq:nonneg:IC}
for all time $t \geq 0$.

The equilibrium points are independent of the delays.
Their stability depends, however, on the delays.
The equilibrium points of \eqref{sistemaT} are studied in
\cite{GlobalStabVirus:Pruss:2008,ZhuZou:DelayHIV:DCDS:2009}.
System \eqref{sistemaT} has an infection-free equilibrium
$E_0=\left(\frac{\lambda}{m},0,0,0\right)$, which is the
only biologically meaningful equilibrium,
if $R_0=\frac{k\lambda r}{muv} < 1$.
Let $R_1=\frac{knr}{mav}$. If $1<R_0<1+R_1$, then system
\eqref{sistemaT} has a unique CTL-inactivated infection
equilibrium $E_1$ given by
$$
E_1= \left(\frac{uv}{kr},\frac{k\lambda r-muv}{kru},
\frac{k\lambda r-muv}{vru},0\right).
$$
Whenever $R_0>1+R_1$, system \eqref{sistemaT} has also
a CTL-activated infection equilibrium $E_2$ given by
$$
E_2= \left(\frac{a\lambda v}{amv+knr},\frac{n}{a},\frac{kn}{av},
\frac{ak\lambda r-amuv-knru}{amvs+knrs}\right).
$$
The proofs of these facts are found in
\cite{GlobalStabVirus:Pruss:2008,ZhuZou:DelayHIV:DCDS:2009}.
Here we prove the local asymptotic stability of the equilibrium points
$E_0$, $E_1$ and $E_2$ for any time delay $\tau$.

\begin{theorem}[Local stability of the equilibrium points of \eqref{sistemaT}]
\label{theo:localstab}
If $R_0 > 1$, then the infection-free equilibrium $E_0$
is unstable for any time-delay $\tau \geq 0$.
If $R_0 < 1$, then $E_0$ is locally asymptotically stable for any time-delay $\tau \geq 0$.
If $R_0=1$, then we have a critical case.
If $R_0>1+R_1$, then the CTL-inactivated infection
equilibrium $E_1$ is unstable for any time-delay $\tau \geq 0$.
If $R_0<1+R_1$, then $E_1$ is locally asymptotically stable for any time-delay $\tau \geq 0$.
If $R_0>1+R_1$, then the CTL-activated infection equilibrium $E_2$
is locally asymptotically stable for any time-delay $\tau \geq 0$.
\end{theorem}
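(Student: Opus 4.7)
My plan is to analyze each equilibrium by linearizing \eqref{sistemaT} around it and studying the roots of the resulting characteristic equation. The linearization has the form $\dot{x}(t) = A x(t) + B x(t-\tau)$, with characteristic equation $\det(\sigma I - A - B e^{-\sigma\tau}) = 0$. For instability I will exhibit a positive real root. For stability my strategy is (i) to verify the hyperbolic type at $\tau = 0$ via Routh-Hurwitz, and (ii) to rule out purely imaginary roots for any $\tau > 0$ by a modulus comparison in the characteristic equation; by continuous dependence of the spectrum on $\tau$, no eigenvalue can then cross to the right half-plane, so the stability type is preserved for all $\tau \geq 0$.

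For the infection-free equilibrium $E_0 = (\lambda/m, 0, 0, 0)$, the characteristic equation factors as $(\sigma+m)(\sigma+n)\bigl[(\sigma+u)(\sigma+v) - uv R_0 e^{-\sigma\tau}\bigr] = 0$, the eigenvalues $-m$ and $-n$ being harmless. The bracket evaluated at $\sigma = 0$ equals $uv(1-R_0)$: when $R_0 > 1$ it is negative and the bracket tends to $+\infty$ along the positive real axis, so a positive real root exists --- instability. When $R_0 < 1$, any root with $\operatorname{Re}\sigma \geq 0$ would force $|(\sigma+u)(\sigma+v)| \leq uv R_0 < uv$, contradicting the elementary bounds $|\sigma+u| \geq u$ and $|\sigma+v| \geq v$.

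For the CTL-inactivated equilibrium $E_1$, the condition $T^* = 0$ makes the last row of the characteristic matrix trivial, so the determinant factors and one eigenvalue is $\sigma = a I^* - n$; a direct computation from the explicit formula for $I^*$ shows $a I^* - n > 0$ iff $R_0 > 1 + R_1$, delivering instability in that regime. The remaining $3 \times 3$ factor, after invoking the equilibrium identities $rV^* = m(R_0 - 1)$ and $krZ^* = uv$ valid at $E_1$, reads $(\sigma + mR_0)(\sigma+u)(\sigma+v) - uv(\sigma+m) e^{-\sigma\tau}$. A root with $\operatorname{Re}\sigma \geq 0$ would require $|\sigma+mR_0||\sigma+u||\sigma+v| \leq uv |\sigma+m|$, but the bounds $|\sigma+mR_0| \geq |\sigma+m|$ (since $mR_0 \geq m$), $|\sigma+u| \geq u$ and $|\sigma+v| \geq v$ force the reverse inequality, with equality requiring both $\sigma = 0$ and $R_0 = 1$ --- excluded since $R_0 > 1$.

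The main obstacle will be the CTL-activated equilibrium $E_2$, where all four variables remain coupled. Expanding the $4 \times 4$ characteristic determinant along the last column and using the equilibrium identities $a I^* = n$ and $krZ^* = v(u + sT^*)$ (both obtained by direct substitution of the explicit $E_2$ coordinates), I arrive at
\begin{equation*}
(\sigma+m+rV^*)(\sigma+v)\bigl[\sigma^2 + (u+sT^*)\sigma + saI^*T^*\bigr] = \sigma(\sigma+m)v(u+sT^*) e^{-\sigma\tau}.
\end{equation*}
At $\tau = 0$ this gives a quartic with manifestly positive coefficients (the key structural fact being $m + rV^* > m$), so Routh-Hurwitz yields stability. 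To exclude purely imaginary roots for $\tau > 0$, I substitute $\sigma = i\omega$ and compare squared moduli; using $(m+rV^*)^2 > m^2$ and cancelling the common factor $\omega^2 + m^2$, the required strict inequality reduces to
\begin{equation*}
(\omega^2 + v^2)(saI^*T^* - \omega^2)^2 + \omega^4 (u+sT^*)^2 > 0,
\end{equation*}
which holds for every real $\omega$ because, at $\omega = 0$, the first term equals $v^2 (saI^*T^*)^2 > 0$, while for $\omega \neq 0$ the second term is strictly positive. Hence no eigenvalue crosses the imaginary axis, and local asymptotic stability persists for every $\tau \geq 0$.
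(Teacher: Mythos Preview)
Your overall strategy coincides with the paper's: linearize, write the transcendental characteristic equation, and control the location of its roots. For $E_0$ and $E_1$ your execution is in fact cleaner than the paper's. Where the paper first treats $\tau=0$ by Routh--Hurwitz and then separately excludes purely imaginary roots by squaring real and imaginary parts, you give a single modulus estimate valid on the whole closed right half-plane. Your use of the equilibrium identities $rV^* = m(R_0-1)$ and $krZ^* = uv$ to recast the $E_1$ cubic factor as $(\sigma+mR_0)(\sigma+u)(\sigma+v)-uv(\sigma+m)e^{-\sigma\tau}$ is a genuine simplification: the paper works instead with a generic cubic $y^3+Ay^2+By+C-(Dy+E)e^{-\tau y}$ and has to check several coefficient positivity conditions by hand. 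For $E_2$ you reach exactly the same factored characteristic equation as the paper (using $aI^*=n$ and $krZ^* = v(u+sT^*)$), and your squared-modulus inequality $(\omega^2+v^2)(saI^*T^*-\omega^2)^2+\omega^4(u+sT^*)^2>0$ is the clean algebraic core that the paper's somewhat elliptical modulus argument is also driving at.

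There is, however, one genuine gap in your $E_2$ analysis. You write that at $\tau=0$ the characteristic equation is ``a quartic with manifestly positive coefficients \ldots\ so Routh--Hurwitz yields stability.'' For a quartic, positivity of the coefficients is necessary but \emph{not} sufficient: you must also verify the Hurwitz determinant conditions $a_1a_2>a_3$ and $a_1a_2a_3>a_3^2+a_1^2a_4$. (The polynomial $\sigma^4+\sigma^3+\sigma^2+\sigma+1$ has all positive coefficients, no purely imaginary roots, yet two roots with positive real part.) Since your imaginary-axis exclusion is a crossing argument, it only propagates stability from $\tau=0$ to $\tau>0$; it does not establish stability at $\tau=0$. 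You therefore need either to check the remaining Hurwitz inequalities explicitly for the quartic with coefficients built from $M=m+rV^*$, $U=u+sT^*$, $S=snT^*$ (this is tedious but elementary), or to extend your modulus bound to the full half-plane $\operatorname{Re}\sigma\ge 0$ as you did for $E_0$ and $E_1$. The paper, incidentally, is silent on the $\tau=0$ case for $E_2$ and invokes Rouch\'e without supplying the base case, so this is a point where both arguments need tightening.
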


\begin{proof}
Consider the following coordinate transformation:
$$
x(t)=Z(t)-\bar{Z}, \qquad y(t)=I(t)-\bar{I},
\qquad w(t)=V(t)-\bar{V}, \qquad q(t)=T(t)-\bar{T},
$$
where $(\bar{Z},\bar{I}, \bar{V},\bar{T})$ denotes
any equilibrium point of system \eqref{sistemaT}.
The linearized system of \eqref{sistemaT} is of form
\begin{equation}
\label{sistemaL}
\left\{
\begin{array}{lcr}
x(t)=-(m+r\bar{V})x(t)-r\bar{Z}w(t),\\
y(t)=r\bar{V}x(t-\tau) - (u+s\bar{T})y(t) +r\bar{Z}w(t-\tau) -s\bar{I}p(t),\\
w(t)=ky(t)-vw(t),\\
p(t)=a\bar{T}y(t)+(a\bar{I}-n)p(t).
\end{array}\right.
\end{equation}
We can express system \eqref{sistemaL} in matrix form as follows:
$$
\frac{d}{dt}
\begin{pmatrix}
x(t)\\ y(t) \\ w(t) \\p(t)
\end{pmatrix}
= A_1
\begin{pmatrix}
x(t)\\ y(t) \\ w(t) \\p(t)
\end{pmatrix}
+  A_2
\begin{pmatrix}
x(t-\tau)\\ y(t-\tau) \\ w(t-\tau) \\p(t-\tau)
\end{pmatrix},
$$
where $A_1$ and $A_2$ are $4\times 4$ matrices given by
$$
A_1=
\begin{pmatrix}
-m-r\bar{V} & 0 & -r\bar{Z} & 0\\
0 & -u-s\bar{T} & 0 & -s\bar{I}\\
0 & k & -v & 0\\
0 & a\bar{T} & 0 & a\bar{I}-n			
\end{pmatrix},
\quad
A_2=
\begin{pmatrix}
0 & 0 & 0 & 0\\
r\bar{V} & 0 & r\bar{Z} & 0\\
0 & 0 & 0 & 0\\
0 & 0 & 0 & 0
\end{pmatrix}.
$$
The characteristic equation of system \eqref{sistemaL}
for any equilibrium point is given by
\begin{equation}
\label{Eq}
\Delta(y)=\left|yI_d-A_1-A_2e^{-\tau y}\right| = 0
\end{equation}
(see, e.g., \cite{YKuang_1993}), where $I_d$ denotes 
the identity matrix of dimension 4, that is,
\begin{equation*}
\left| \begin{matrix}
m+r\bar{V}+y & 0 & r\bar{Z} & 0\\
-\bar{V}re^{-\tau y} & u+\bar{T}s+y & -\bar{Z}re^{-\tau y} & \bar{I}s\\
0 & -k & v+y & 0\\
0 & -\bar{T}a & 0 & y+n-a\bar{I}			
\end{matrix} \right| = 0.
\end{equation*}
(i) \emph{Stability of the infection-free equilibrium $E_0$}.
The characteristic equation at $E_0$ is given by
\begin{equation}
\label{e0}
(y+n)(y+m)\left( (u+y)(v+y) -\frac{k\lambda r}{m}e^{-\tau y}\right)=0.
\end{equation}
Assume that $\tau =0$. In this case, the equation \eqref{e0} becomes
\begin{equation}
\label{e0t}
(y+n)(y+m)\left( (u+y)(v+y) -\frac{k\lambda r}{m}\right)=0.
\end{equation}
We need to prove that all the roots of the characteristic equation
have negative real parts. It is easy to see that $y_1=-n$ and $y_2=-m$
are roots of equation \eqref{e0t} and both are real negative roots.
Thus, we just need to consider the third term of the above equation. Let
\begin{equation*}
p(y):=y^2+(u+v)y + uv-\frac{k\lambda r}{m}=0.
\end{equation*}
Using the Routh--Hurwitz criterion, we know that all roots
of $p(y)$ have negative real parts if and only if the coefficients $a_i$
of $p(y)$ are strictly positive. In our case,
\begin{gather*}
a_1=1>0,\\
a_2=u+v>0,\\
a_3=uv-\frac{k\lambda r}{m} >0 \text{ if and only if } 
R_0 = \frac{k\lambda r}{muv} < 1.  
\end{gather*}
Hence, if $R_0 < 1$, then all roots of the characteristic equation
\eqref{e0t} have negative real parts. Therefore, $E_0$
is locally asymptotically stable for $\tau =0$.
Suppose now that $\tau > 0$. To prove the stability of $E_0$ we use Rouch\'e theorem, 
so we need to prove that all the roots of the characteristic equation \eqref{e0}
cannot intersect the imaginary axis, i.e., the characteristic equation cannot
have pure imaginary roots. Suppose the reverse, i.e., that there exists
$w \in \mathbb{R}$ such that $y=wi$ is a solution of \eqref{e0}. Replacing
$y$ in the third term of \eqref{e0}, we get
\begin{equation*}
(wi)^2+(u+v)wi + uv-\frac{k\lambda r}{m}e^{-\tau wi}=0.
\end{equation*}
Note that we do not need to consider the full equation \eqref{e0}
because we already know that the remaining part of this equation
has just two real negative solutions. By using the Euler formula
and separating the real and imaginary parts of the above equation,
we obtain that
$$
\left\lbrace
\begin{array}{l}
-w^2+ uv = \frac{k\lambda r}{m} \cos(w\tau), \\[0.3cm]
(u+v)w=-\frac{k\lambda r}{m} \sin(w\tau).
\end{array} \right.
$$
By adding up the squares of both equations and using the
fundamental trigonometric formula, we obtain that
$$
(-w^2+ uv)^2 + (u+v)^2w^2 - \left(\frac{k\lambda r}{m}\right)^2=0,
$$
which is the same as
$$
w^4 + (u^2+v^2)w^2 + u^2v^2  - \left(\frac{k\lambda r}{m}\right)^2=0
$$
and equivalent to
$$
w^2=\frac{1}{2}\left(-(u^2+v^2) + \sqrt{\left(u^2+v^2\right)^2
-4\left(u^2v^2-\left(\frac{k\lambda r}{m}\right)^2\right)}\right).
$$
If $R_0 < 1$, then $muv-k\lambda r>0$, which implies $(muv)^2-(k\lambda r)^2>0$.
Consequently,
$$
u^2v^2-\left(\frac{k\lambda r}{m}\right)^2>0
$$
and
$$
\sqrt{\left(u^2+v^2\right)^2
-4\left(u^2v^2-\left(\frac{k\lambda r}{m}\right)^2\right)} < u^2+v^2.
$$
Hence, we have $w^2<0$, which is a contradiction. Therefore, we proved that
if $R_0 < 1$, then the characteristic equation \eqref{e0} cannot have pure
imaginary roots and the infection-free equilibrium $E_0$
is locally asymptotically stable for any strictly positive time-delay.
Suppose now that $R_0 > 1$. We know that the characteristic equation \eqref{e0}
has two real negative roots $y=-n$ and $y=-m$. Thus, we need to check
if the remaining roots of
\begin{equation*}
q(y):=(u+y)(v+y) -\frac{k\lambda r}{m}e^{-\tau y}
\end{equation*}
have negative real parts. It is easy to see that $q(0)=uv-\frac{k\lambda r}{m}<0$,
because we are assuming $R_0 > 1$. On the other hand,
$\lim\limits_{y\rightarrow+\infty} q(y)=+\infty$. Therefore, by continuity
of $q(y)$, there is at least one positive root of the characteristic equation
\eqref{e0}. Hence, we conclude that $E_0$ is unstable.
Finally, we need to analyse the case $R_0=1$, i.e., $muv=k\lambda r$.
In this case the characteristic equation \eqref{e0} becomes
\begin{equation}
\label{0}
(y+n)(y+m)\left(y^2 + (u+v)y +uv-uv e^{-\tau y}\right)=0.
\end{equation}
To prove the stability, we need to check again if all the roots
of the above equation have negative real parts. Note that $y=0$,
$y=-n$ and $y=-m$ are solutions of this equation, so we just need
to prove that the remaining roots cannot have nonnegative real parts.
Assuming that $y=a+bi$ with $a\geq 0$ is a solution of the above equation, then
\begin{equation*}
(a+bi)^2+(u+v)(a+bi) +uv-uv e^{-\tau (a+bi)}=0.
\end{equation*}
By using the Euler formula and by separating the real and imaginary parts, we get
$$
\left\lbrace
\begin{array}{l}
a^2-b^2+ (u+v)a+uv = uve^{-\tau a}\cos(\tau b),\\
2ab+(u+v)b=-uve^{-\tau a} \sin(w\tau).
\end{array} \right.
$$
Adding up the squares of both equations and using
the fundamental trigonometric formula, we obtain
$$
\left(a^2-b^2+ (u+v)a+uv\right)^2 + \left(2ab+(u+v)b\right)^2
= \left(uve^{-\tau a}\right)^2\leq u^2v^2,
$$
which is a contradiction because
\begin{equation*}
\begin{split}
(a^2-b^2&+ (u+v)a+uv)^2 + \left(2ab+(u+v)b\right)^2\\
&= {a}^{4}+2\,{a}^{3}u+2\,{a}^{3}v+2\,{a}^{2}{b}^{2}+{a}^{2}{u}^{2}
+4\,{a}^{2}uv+{a}^{2}{v}^{2}+2\,a{b}^{2}u\\
&\quad +2\,a{b}^{2}v+2\,a{u}^{2}v
+2\,au{v}^{2}+{b}^{4}+{b}^{2}{u}^{2}+{b}^{2}{v}^{2}+{u}^{2}{v}^{2}
> u^2v^2.
\end{split}
\end{equation*}
This proves that $0$ is the unique root
of \eqref{0} that does not have negative real part.\\
(ii) \emph{Stability of CTL-inactivated infection equilibrium $E_1$}.
Assume that $R_0>1$. The characteristic equation \eqref{Eq} at 
$E_1=\left(Z_1,I_1,V_1,T_1\right)=\left(\frac{uv}{kr},\bar{I},\bar{V},0\right)$
is given by
\begin{equation}
\label{aa}
\left(n-\bar{I}a+y\right)\left(y^3+Ay^2+By+C-(Dy+E)e^{-\tau y}\right)=0,
\end{equation}
where
$A=m+u+v+\bar{V}r$, $B=\bar{V}ru+\bar{V}rv+mu+mv+uv$, $C=muv + \bar{V}ruv$,
$D=uv$, and $E= muv$. Note that $y=\bar{I}a-n$ is a solution of \eqref{aa}.
If $1<R_0<1+R_1$, then
$$
\frac{k\lambda r}{muv}<1+\frac{knr}{amv}.
$$
After some basic simplifications, we have
$\bar{I}a-n <0$. Hence, if $R_0>1+R_1$, then the characteristic equation
\eqref{aa} has a positive root and, consequently, the equilibrium
$E_1$ is not locally asymptotically stable. On the other hand,
if $1<R_0<1+R_1$, then $y=\bar{I}a-n$ is a real negative root
of the characteristic equation \eqref{aa} and we just need to analyze the equation
\begin{equation}
\label{qa}
y^3+Ay^2+By+C-(Dy+E)e^{-\tau y}=0.
\end{equation}
Consider $\tau =0$. From equation \eqref{qa} we have
\begin{equation}
\label{a}
y^3+Ay^2+(B-D)y+(C-E)=0,
\end{equation}
where
$A=m+u+v+\bar{V}r >0$, $B-D=\bar{V}ru+\bar{V}rv+mu+mv >0$, $C-E=\bar{V}ruv>0$
and $A(B-D)>(C-E)$. Therefore, from the Routh--Hurwitz criterion, it follows
that all roots of \eqref{a} have negative real parts. Hence, $E_1$
is locally asymptotically stable for $\tau=0$. Let $\tau > 0$.
Suppose that \eqref{qa} has pure imaginary roots, $wi$.
By replacing $y$ in \eqref{qa} by $wi$, we get
\begin{equation*}
-Aw^2+C-E\cos(w\tau)-Dw\sin(w\tau)+i(-w^3+Bw-Dw\cos(w\tau)+E\sin(w\tau))=0.
\end{equation*}
If we separate the real and imaginary parts, then we obtain
$$
\left\lbrace
\begin{array}{l}
-Aw^2+C= E\cos(w\tau)+Dw\sin(w\tau),\\
-w^3+Bw= Dw\cos(w\tau)-E\sin(w\tau).
\end{array} \right.
$$
By adding up the squares of both equations, and using
the fundamental trigonometric formula, we obtain that
$$
E^2+D^2w^2=(-Aw^2+C)^2+(-w^3+Bw)^2,
$$
which is equivalent to
$$
w^6+(A^2-2B)w^4+(B^2-2AC-D^2)w^2+(C^2-E^2)=0.
$$
Since
\begin{gather*}
A^2-2B=m^2+u^2+v^2+(\bar{V}r)^2+2\bar{V}mr>0,\\
B^2-2AC-D^2=
(\bar{V}ru)^2+\bar{V}rv)^2 +(mu)^2+(mv)^2+2\bar{V}rm(u^2+v^2)>0,\\
C^2-E^2=(\bar{V}ruv)^2+2\bar{V}ru^2v^2m>0,
\end{gather*}
we have that the left hand-side of equation \eqref{a} is strictly positive,
which implies that this equation is not possible. Therefore, \eqref{aa}
does not have imaginary roots, which implies that $E_1$ is locally asymptotically
stable for any time delay $\tau \geq 0$.\\
(iii) \emph{Stability of CTL-activated infection equilibrium $E_2$}.
Assume $R_0>1+R_1$. The characteristic equation \eqref{Eq}
at $E_2=\left(Z_2,I_2,V_2, T_2\right)
=\left(\bar{Z},\frac{n}{a},\bar{V},\bar{T}\right)$
becomes
\begin{equation}
\label{ewe}
(y+m+\bar{V}r)(v+y)\left( \bar{T}ns+y(u+y+\bar{T}s)\right)
=(y+m)v\frac{\bar{Z}kry}{v}e^{-\tau y}.
\end{equation}
Suppose that there is a $wi$, $w \in \R$,
such that $y=wi$ is root of equation \eqref{ewe}. Then,
\begin{equation*}
(wi+m+\bar{V}r)(v+wi)\left( \bar{T}ns+wi(u+wi+\bar{T}s)\right)
=(wi+m)v\bar{Z}\frac{kr}{v}wie^{-\tau wi},
\end{equation*}
which implies
\begin{equation}
\label{ee}
\left|wi+m+\bar{V}r\right|^2  \left|v+wi\right|^2
\left| \bar{T}ns+wi(u+wi+\bar{T}s)\right|^2
=\left|wi+m\right|^2 \left|v\right|^2\left|\frac{\bar{Z} k r}{v}wi\right|^2.
\end{equation}
Since
$\left|wi+m+\bar{V}r\right|^2 >\left|wi+m\right|^2$ and
$\left|v+wi\right|^2 >\left|v\right|^2$, 
it follows from \eqref{ee} that
$$
\left| \bar{T}ns+wi(u+wi+\bar{T}s)\right|^2
\geq\left|\bar{Z}\frac{kr}{v}wi\right|^2.
$$
We conclude that the left hand-side of \eqref{ee}
is always strictly greater than the right hand-side,
which implies that this equation is impossible.
Hence, the solutions of the characteristic equation \eqref{ewe}
cannot be pure imaginary. Therefore, by Rouch\`e theorem,
$E_2$ is locally asymptotically stable
for any time-delay $\tau \geq 0$.
\end{proof}


\section{Optimal control of the HIV model
with intracellular and pharmacological delays}
\label{sec:delay:OC:problem}

In the human system, RNA molecules are produced from DNA.
Nevertheless, there are enzymes that make the reverse process, i.e.,
they can obtain DNA molecules from RNA. Such an enzyme is called
a \emph{reverse transcriptase}. One kind of such enzymes are found in HIV-1.
As a result, when a virus particle infects a T-cell,
it comes into the kernel of the cell and makes the reverse
transcriptase process converting the RNA viral molecules into DNA
viral molecules, which are then combined with DNA molecules of the CTLs.
Hence, CTLs work to create new viruses instead of doing the defense
job they are supposed to do in the immune system. Nowadays, there
are drugs that can inhibit the reverse transcriptase,
which allow the CTLs to keep their natural work.
In this section, we formulate an optimal control problem for HIV-1
infection, with time delay in state and control variables,
and derive extremals for the minimization of virus
by the use of drugs that inhibit the reverse transcriptase of CTLs.

We introduce a control function $c(t)$ in model \eqref{sistemaT},
$t \in [0, t_f]$, that represents the efficiency
of the reverse transcriptase inhibitors, which block a new infection.
Due to the importance of the pharmacological delay in the HIV treatment,
we consider a discrete time delay in the control variable $c(t)$,
denoted by $\xi$, which represents the delay that occurs
between the administration of a drug and its appearance within the cells,
due to the time required for drug absorption, distribution,
and penetration into the target cells \cite{PerelsonEtAllHIVdynamicsScience1996}.
We propose the following control system with discrete time delay
in the state and control variables:
\begin{equation}
\label{eq:model:delay:control}
\left\{
\begin{array}{lcr}
\dot{Z}(t)=\lambda-mZ(t)-(1-c(t-\xi))rV(t)Z(t),\\
\dot{I}(t)=(1-c(t - \xi))rV(t-\tau)Z(t-\tau)-uI(t)-sI(t)T(t),\\
\dot{V}(t)=kI(t)-vV(t),\\
\dot{T}(t)=aI(t)T(t)-nT(t).
\end{array}\right.
\end{equation}
The initial conditions for the state variables $I$ and $T$ and,
due to the delays, initial functions for the state variables
$Z$ and $V$ and control $c$, are given by
\begin{equation}
\label{eq:initcond:delays}
\begin{split}
I(0) &= 3, \quad T(0) = 20,\\
Z(t) &\equiv 45, \quad V(t) \equiv 75,
\quad -\tau \leq t \leq 0,\\
c(t) &\equiv 0, \quad -\xi \leq t < 0.
\end{split}
\end{equation}
We note that values \eqref{eq:initcond:delays} are the only ones
that are considered in our numerical simulations (Section~\ref{sec:numerical}).
The control function $c(t)$ is bounded between 0 and 1.
If it takes the value 0, then the drug therapy for the transcriptase
reversion has no efficacy. If the control takes the value 1, then
it will be 100\% effective. Precisely, we consider following set
of admissible control functions:
\begin{equation}
\label{eq:adm:controls}
\Theta = \biggl\{ c(\cdot) \in L^1\left([0, t_f], \R \right) \,
| \,  0 \leq c (t) \leq 1 \, ,  \, \forall \, t \in [0, t_f] \, \biggr\} .
\end{equation}
We consider the $L^1$ objective functional
\begin{equation}
\label{costfunction}
J(c(\cdot)) = \int_0^{t_f} \left[ V(t)
+ w\cdot c(t) \right] dt  \quad (\mbox{weight parameter} \;\, w \geq 1),
\end{equation}
which measures the concentration of virus and the treatment costs
for the period of time under study. The optimal control problem
consists in determining a control function
$c(\cdot) \in L^1\left([0, t_f], \R \right)$ that minimizes the cost functional
\eqref{costfunction} subject to the control system \eqref{eq:model:delay:control},
initial conditions \eqref{eq:initcond:delays}
and control constraints \eqref{eq:adm:controls}.
In Section~\ref{sec:4.2}, we present numerical solutions for three cases 
of delays $\tau$ and $\xi$ and weights $w=1$ and $w=5$.
To apply the optimality conditions given by the Minimum Principle
for Multiple Delayed Optimal Control Problems
of \cite[Theorem~3.1]{Goellmann-Maurer-14}, we introduce the delayed
state variables $\zeta(t) =Z(t - \tau)$, $\eta(t) = V(t - \tau)$
and the control variable $\omega(t) = c(t - \xi)$. Using the adjoint
variable $\lambda = \left( \lambda_Z, \lambda_I, \lambda_V,
\lambda_T \right) \in \R^4$, the Hamiltonian for the cost functional
\eqref{costfunction} and the control system
\eqref{eq:model:delay:control} is given by
\begin{multline*}
H(Z, \zeta, I, V, \eta, T, \lambda, c, \omega)
= V + w c + \lambda_Z \left( \lambda-mZ -(1-\omega)r V Z \right)\\
+ \lambda_I \left((1-\omega)r \eta \zeta - u I 
- sI T \right) + \lambda_V \left( kI -v V \right)
+ \lambda_T \left(aI T -nT\right).
\end{multline*}
The adjoint equations are given by
\begin{equation*}
\begin{cases}
\dot{\lambda}_Z(t) = - H_Z[t] - \chi_{[0, t_f-\tau]} H_\zeta[t + \tau],\\
\dot{\lambda}_V(t) = - H_V[t]  - \chi_{[0, t_f-\tau]} H_\eta[t + \tau],\\
\dot{\lambda}_I(t) = - H_I[t], \quad \dot{\lambda}_T(t) = - H_T[t] ,
\end{cases}
\end{equation*}
where the subscripts denote partial derivatives and
$\chi_{[0, t_f-\tau]}$ is the characteristic function
in the interval $[0, t_f - \tau]$ (see \cite{Goellmann-Maurer-14}).
Since the terminal state is free, i.e.,
$(Z(t_f), I(t_f), V(t_f), T(t_f)) \in \R^4$,
the transversality conditions are
$$
\lambda_Z(t_f)=\lambda_I(t_f) = \lambda_V(t_f) = \lambda_T(t_f) = 0.
$$
To characterize the optimal control $c$, we introduce
the following \emph{switching function}:
\begin{equation}
\label{eq:sf}
\begin{split}
\phi(t) &= H_c[t] + \chi_{[0, t_f - \xi]} H_\omega[t + \xi]\\
&= \begin{cases}
1 + \lambda_Z(t +\xi) r V(t + \xi) Z(t + \xi)
- \lambda_I(t +\xi) r \eta(t + \xi) \zeta(t + \xi)
\\
\hspace{9mm} \text{for} \quad 0 \leq t \leq t_f - \xi,\\
1 \qquad \text{for} \quad t_f - \xi \leq t \leq t_f.
\end{cases}
\end{split}
\end{equation}
The minimality condition of the Minimum Principle
\cite[Theorem~3.1]{Goellmann-Maurer-14} gives the control law
\begin{equation}
\label{control-law}
c(t) = \left\{
\begin{array}{rcl}
1 &&\mbox{if} \quad \phi(t) < 0,  \\[1mm]
0 && \mbox{if} \quad \phi(t) > 0,  \\[1mm]
{\rm singular} &&  \mbox{if} \quad \phi(t) = 0
\;\; \mbox{on} \; I_s \subset [0,t_f].
\end{array}
\right.
\end{equation}

Similar arguments can also be used to solve related optimal control problems, 
e.g., one may consider an additional constraint on the final
virus concentration or inclusion of the final values
of this concentration in the cost functional.


\section{Numerical simulations}
\label{sec:numerical}

In this section, we study numerically the stability
of the delayed model \eqref{sistemaT} proposed
in Section~\ref{Section:delay:model} and the solution
of the optimal control problem proposed
in Section~\ref{sec:delay:OC:problem}.
We consider the initial conditions \eqref{eq:initcond:delays}
and the parameter values as given in Table~\ref{table:parameter},
which are based on \cite{Haffat:Yousfi:OCHIVdelay:2012}.
\begin{table}[htbp]
\begin{center}
\begin{tabular}{c|c|c} \hline
Parameter & Value & Units\\[0.1cm] \hline
$\lambda$ & $5 $ &$ \; day^{-1}mm^{-3}$ \\
$m$& $0.03 $ &$\; day^{-1}$\\
$r$& $0.0014 $ &$\; mm^3 virion^{-1} day^{-1}$\\
$u$&  $0.32 $ &$\; day^{-1}$\\
$s$&  $0.05 $ &$\; mm^3 day^{-1}$\\
$k$& $153.6 $ &$\; day^{-1}$\\
$v$& $1 $ &$\; day^{-1}$\\
$a$& $0.2  $ &$\; mm^3 day^{-1}$\\
$n$&  $0.3 $ &$\; day^{-1}$\\
$t_f$&  $50 $ &$\; day$\\
$\tau$&  $0.5 $ &$\; day$\\	
$\xi$&  $0.2 $ &$\; day$\\ \hline
\end{tabular}
\caption{Parameter values.}
\label{table:parameter}
\end{center}
\end{table}


\subsection{Stability of the delayed HIV model}

Considering the parameter values from Table~\ref{table:parameter},
we have the following values for the thresholds $R_0$ and $R_1$
of Section~\ref{Section:delay:model}:
\begin{equation*}
R_0 = 112 \quad \quad \text{and} \quad \quad R_1 = 10.752.
\end{equation*}
From Theorem~\ref{theo:localstab}, the CTL-activated
infection equilibrium
$$
E_2 = \left( 14.182, 1.5, 230.4, 54.5939 \right)
$$
of system \eqref{sistemaT} is locally asymptotically stable
for any time delay $\tau \geq 0$. In Figure~\ref{fig:endemic:equi},
we observe the stability of system \eqref{sistemaT} in a time interval
of $500$ days and a time delay of $0.5$ days ($\tau = 0.5$).
\begin{figure}[htb]
\centering
\subfloat[\footnotesize{State variables}]{\label{statevar:endemic}
\includegraphics[width=0.45\textwidth]{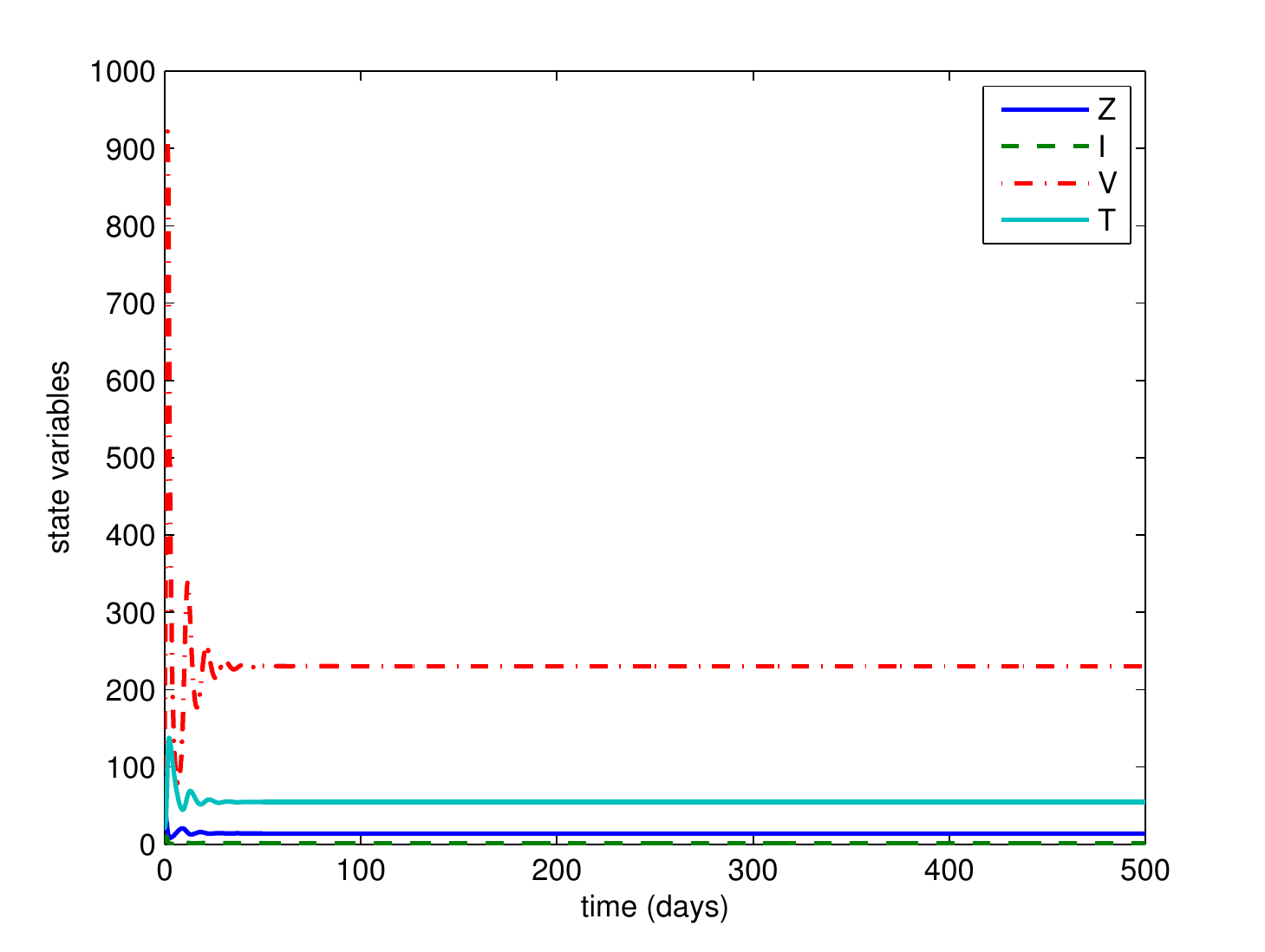}}
\subfloat[\footnotesize{$(T, V)$}]{\label{statevar:VT:endemic}
\includegraphics[width=0.45\textwidth]{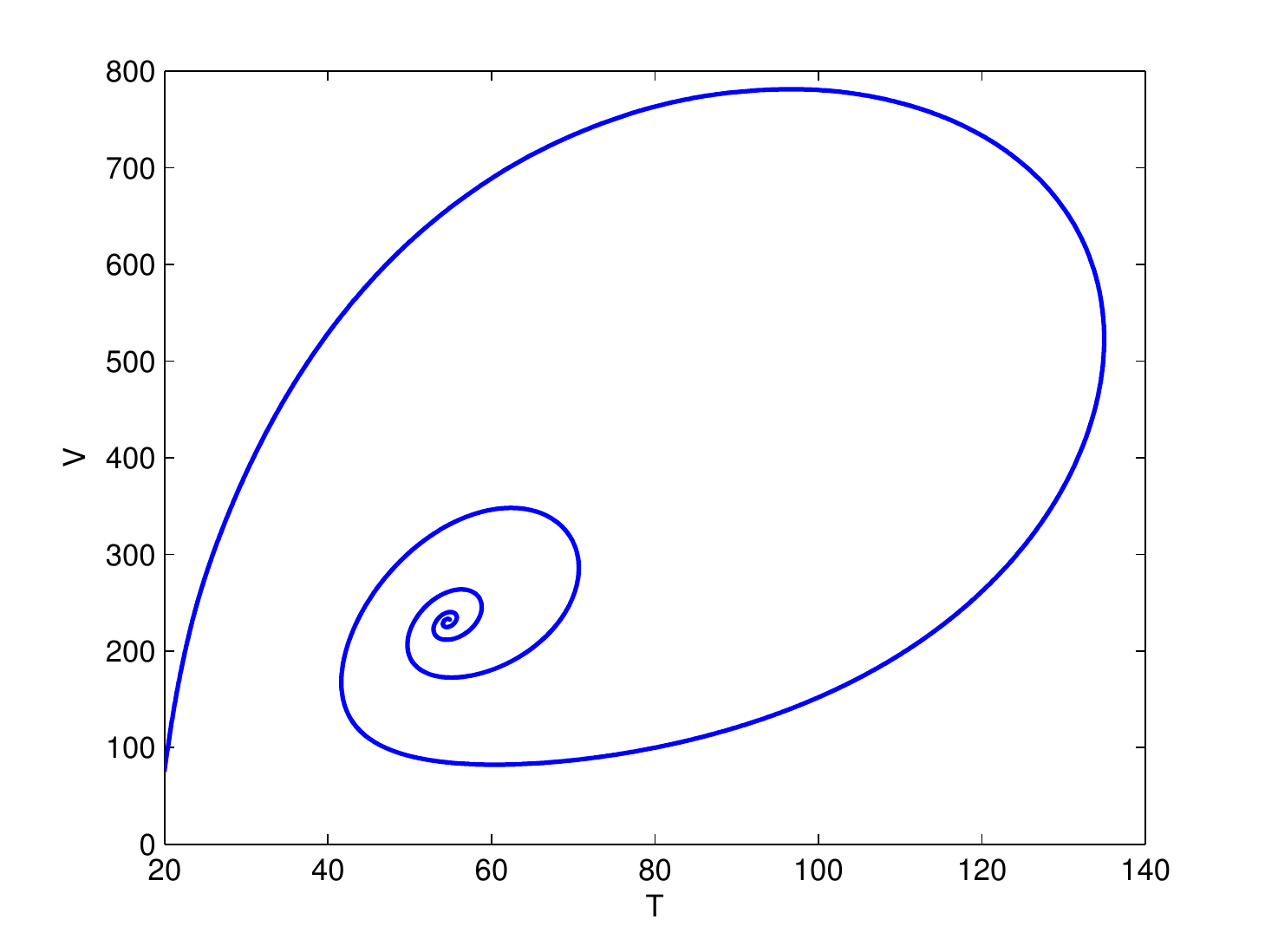}}
\caption{Endemic equilibrium $E_2$ for the parameter values
of Table~\ref{table:parameter} and time delay $\tau=0.5$.}
\label{fig:endemic:equi}
\end{figure}
In Figure~\ref{fig:statevars:Delay:yes:no}, we compare the behavior
of system \eqref{sistemaT} for $\tau = 0$ (no delay)
and delay $\tau = 0.5$.
\begin{figure}[htb]
\centering
\subfloat[\footnotesize{Concentration of uninfected target cells $Z$}]{\label{statevar:Z}
\includegraphics[width=0.45\textwidth]{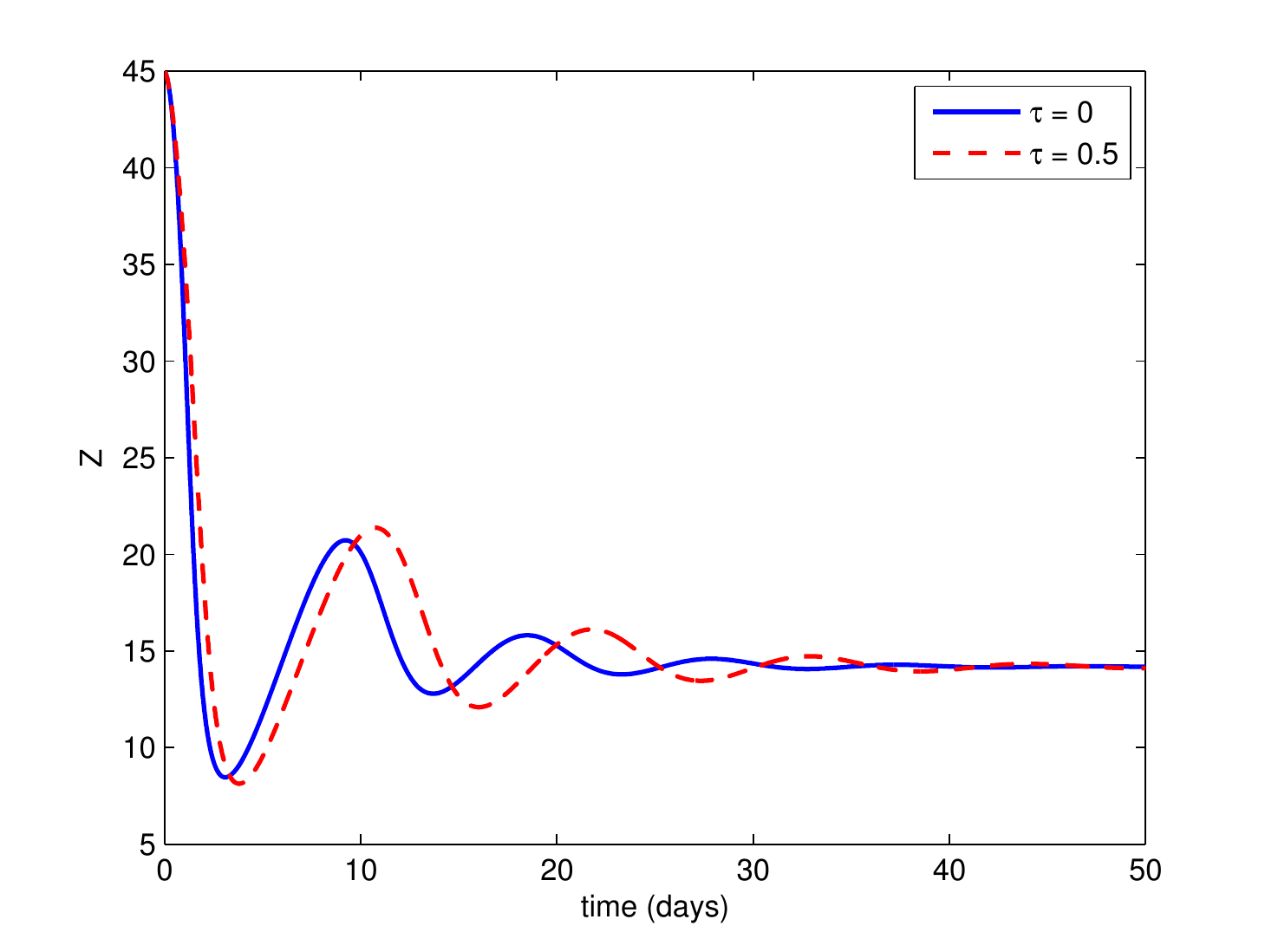}}
\subfloat[\footnotesize{Concentration of infected cells $I$}]{\label{statevar:I}
\includegraphics[width=0.45\textwidth]{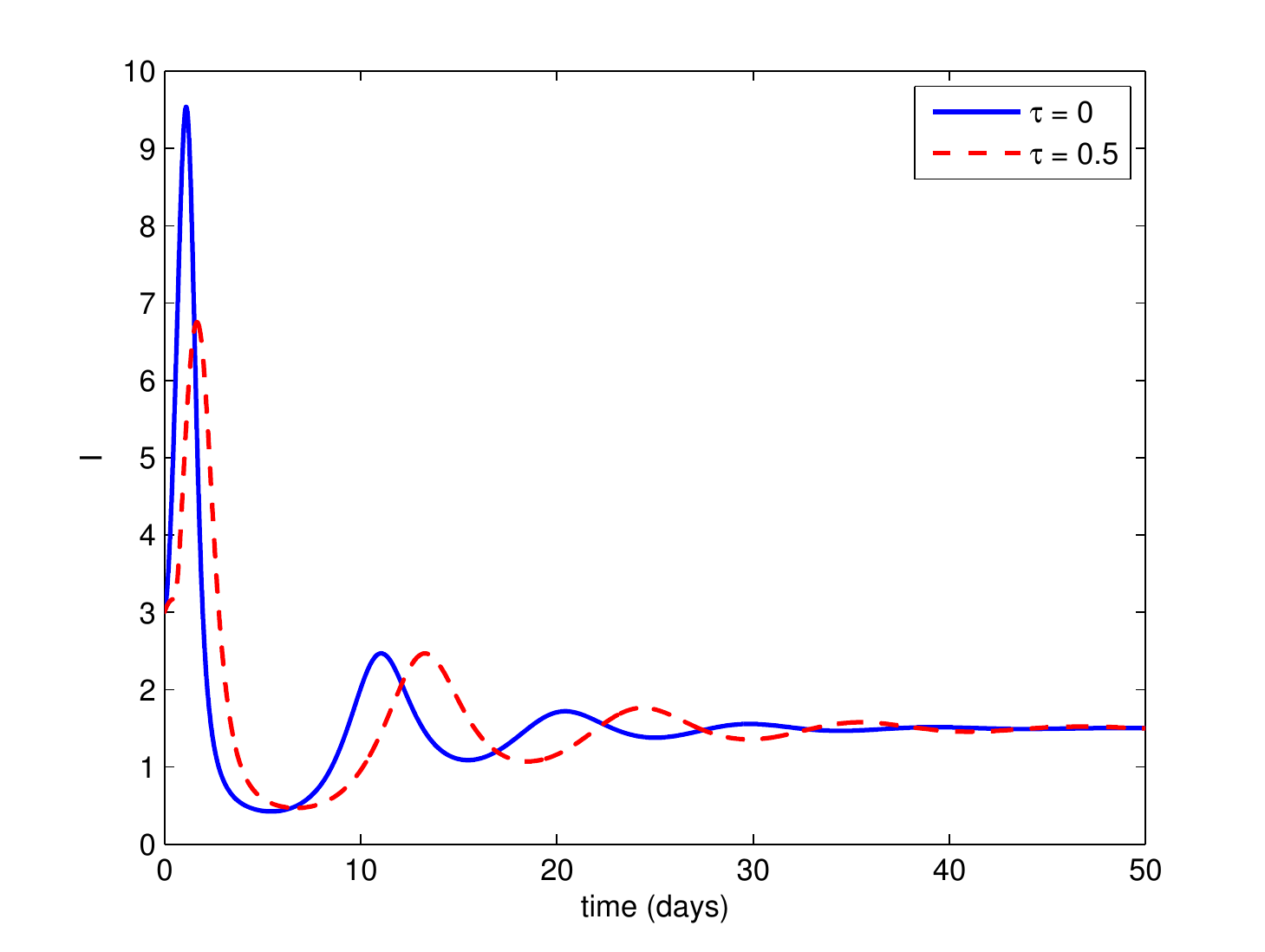}}\\
\subfloat[\footnotesize{Concentration of virus $V$}]{\label{statevar:V}
\includegraphics[width=0.45\textwidth]{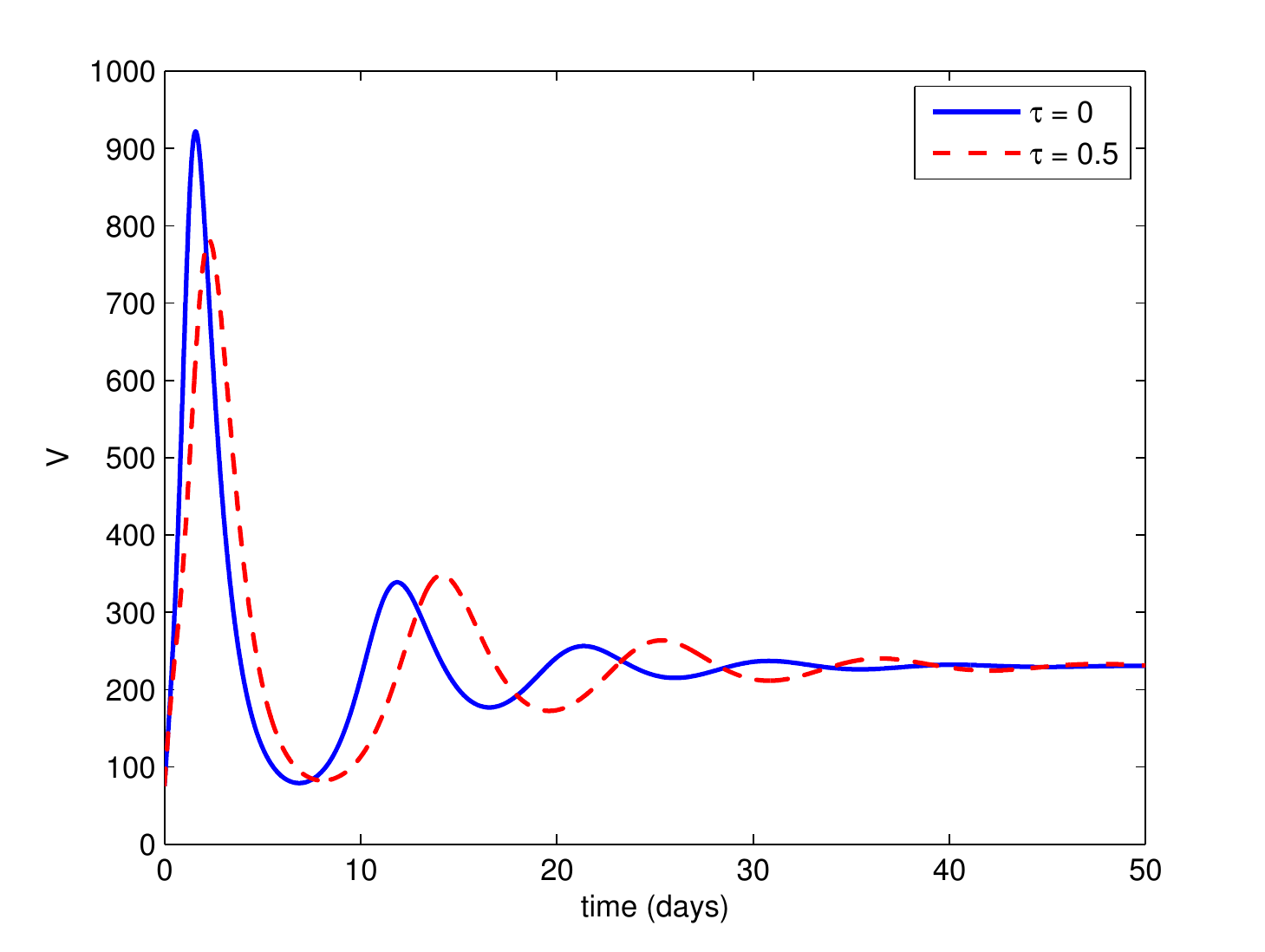}}
\subfloat[\footnotesize{Concentration of CTLs $T$}]{\label{statevar:T}
\includegraphics[width=0.45\textwidth]{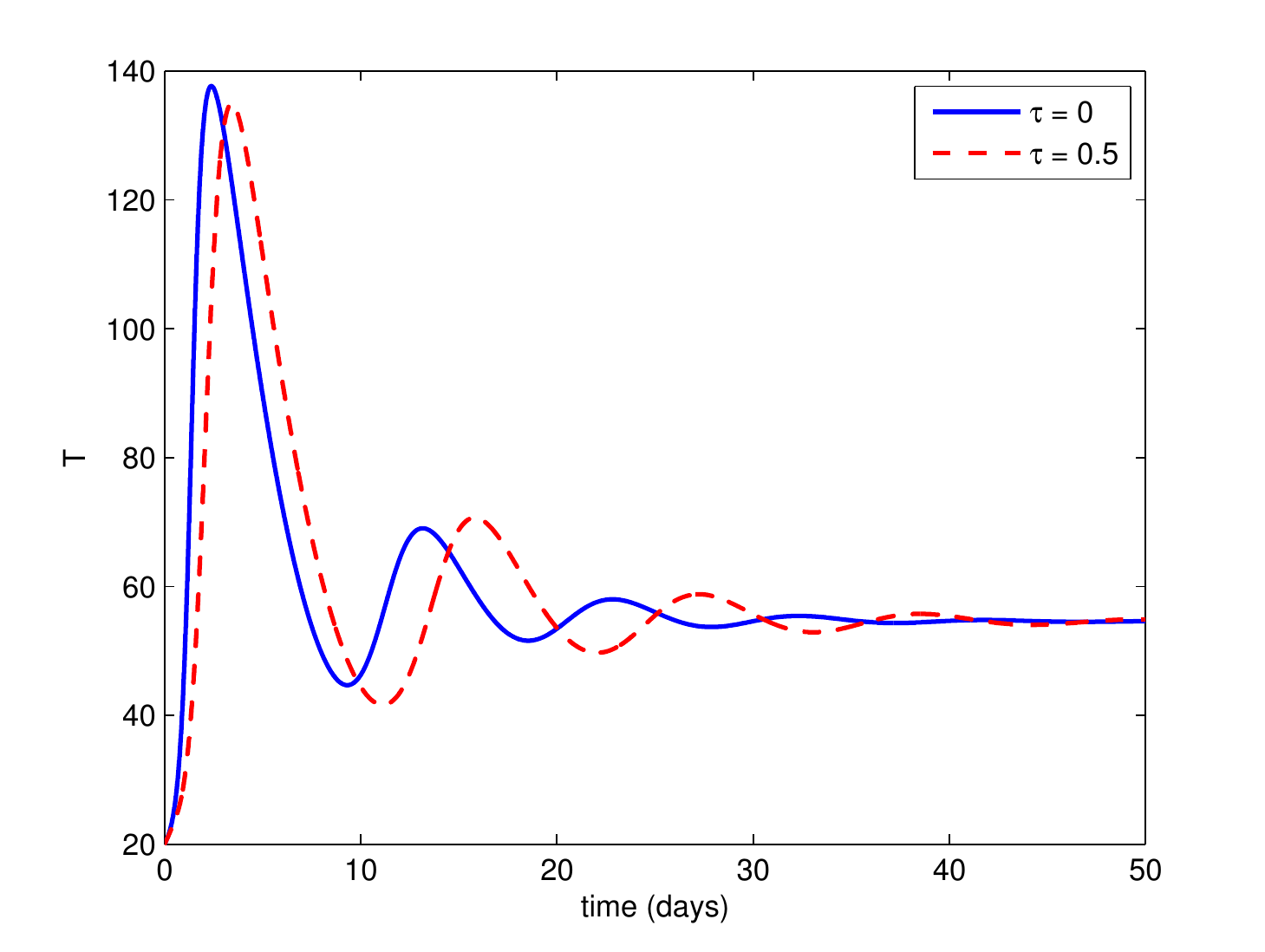}}
\caption{State variables with time delay $\tau = 0.5$ (dashed curves)
versus without delay (continuous curves).}
\label{fig:statevars:Delay:yes:no}
\end{figure}
The first local maximum of concentration of infected cells,
virus and CTLs is smaller in the delayed case ($\tau = 0.5$).
The local maxima are similar, although they
are attained at latter in the delayed case, when compared
to the nondelayed situation. At the end of 50 days,
the values of the variables $Z(t)$, $I(t)$, $V(t)$ and $T(t)$
are similar in delayed and nondelayed cases.


\subsection{Optimal control problem with state and control delays}
\label{sec:4.2}

In this section, we present numerical solutions to the delayed 
optimal control problem \eqref{eq:model:delay:control}--\eqref{costfunction}
in the time interval $[0, 50]$ days and consider three cases:
\begin{description}
\item[Case 1] $\tau = \xi = 0$ (no delays);

\item[Case 2] $\tau = 0.5$, $\xi = 0$ (intracellular delay $\tau$ only);

\item[Case 3] $\tau = 0.5$, $\xi = 0.2$ 
(intracellular delay $\tau$ and pharmacological delay $\xi$).
\end{description}
As before, we consider the parameter values from Table~\ref{table:parameter}
and the weight parameters $w=1$ and $w=5$ in the cost functional \eqref{costfunction}.
To solve the delayed optimal control problem 
\eqref{eq:model:delay:control}--\eqref{costfunction},
we discretize the control problem on a sufficiently fine grid \cite{Goellmann-Maurer-14} 
and obtain a nonlinear optimization problem (\textsc{NLP}). The \textsc{NLP} 
is implemented using the Applied Modeling Programming Language \textsc{AMPL} 
\cite{AMPL}, which can be interfaced with several large-scale nonlinear
optimization solvers like the interior-point solver \textsc{Ipopt}; see \cite{Waechter-Biegler}.
We mostly use $N=2500$ grid nodes and the trapezoidal rule as integration method 
to compute the solution with an error tolerance of $eps= 10^{-9}$.
In all three cases, the computed controls 
are bang-bang with only one switch at $t_s$:
\begin{equation}
\label{control-bang}
c(t) = \left\{
\begin{array}{rcl}
1 &&\text{for} \quad 0 \leq t <t_s ,\\[1mm]
0 &&\text{for} \quad t_s \leq t \leq 50 .
\end{array}
\right.
\end{equation}
For the weight $w=1$, we obtain the following numerical results:
$$
\begin{array}{llll}
\mbox{Case 1}: & J(c) = 475.19, & t_s = 47.08,   & Z(50) = 139.48,\\
&  I(50) =  3.6479e-02    , & V(50) = 0.96174 & T(50) = 9.70008e-06 .\\[1mm]
\mbox{Case 2}: & J(c) = 473.05, & t_s = 44.78,   & Z(50) = 139.45,\\
&  I(50) =   1.9975e-02   , & V(50) = 0.90843   , & T(50) =  9.7025e-06.\\[1mm]
\mbox{Case 3}: & J(c) = 556.70  , & t_s = 44.50 ,   & Z(50) = 139.56 , \\
&  I(50) =   1.9181e-02   , & V(50) = 0.87283   , & T(50) =  1.0822e-05.
\end{array}
$$
A zoom into the controls and switching functions, in a neighborhood 
of the switching time $t_s$, is displayed 
in Figure~\ref{Fig-control-switching-zoom}.
\begin{figure}[ht!]
\centering
\begin{tabular}{lll}
\hspace{-10mm}
\includegraphics[width=4.8cm,height=4.2cm]{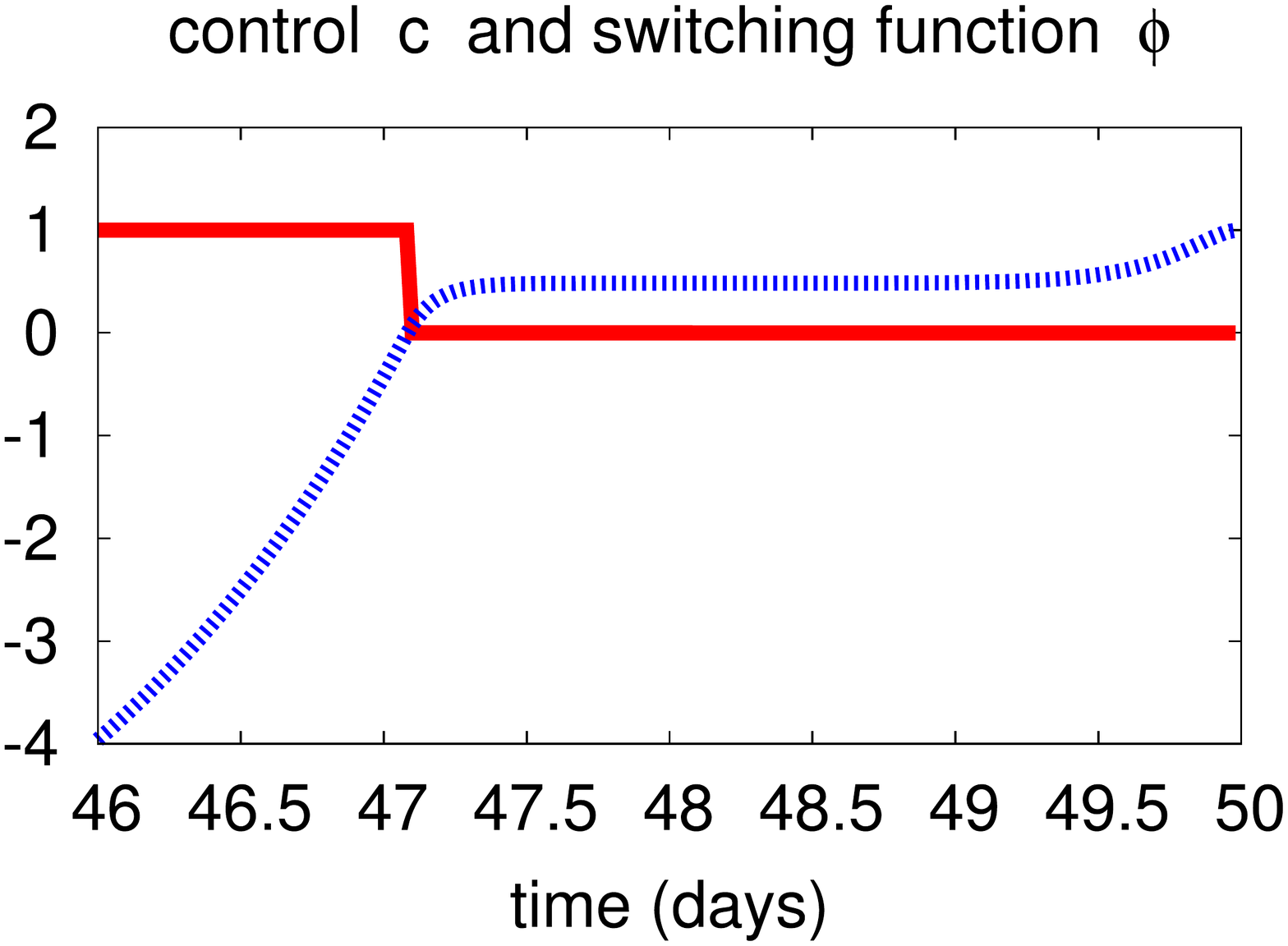}
& \hspace{-8mm}
\includegraphics[width=4.8cm,height=4.2cm]{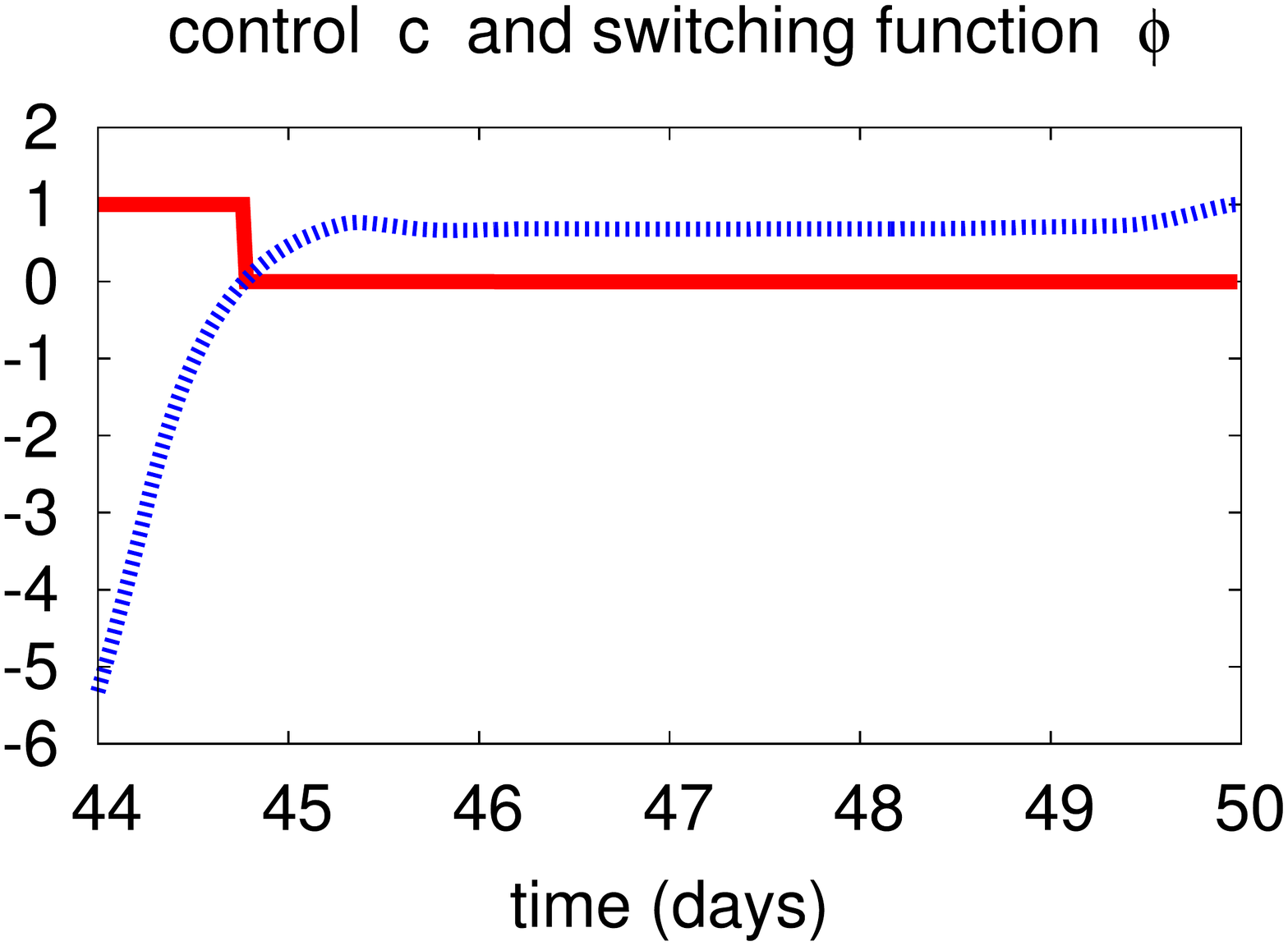}
& \hspace{-8mm}
\includegraphics[width=4.8cm,height=4.2cm]{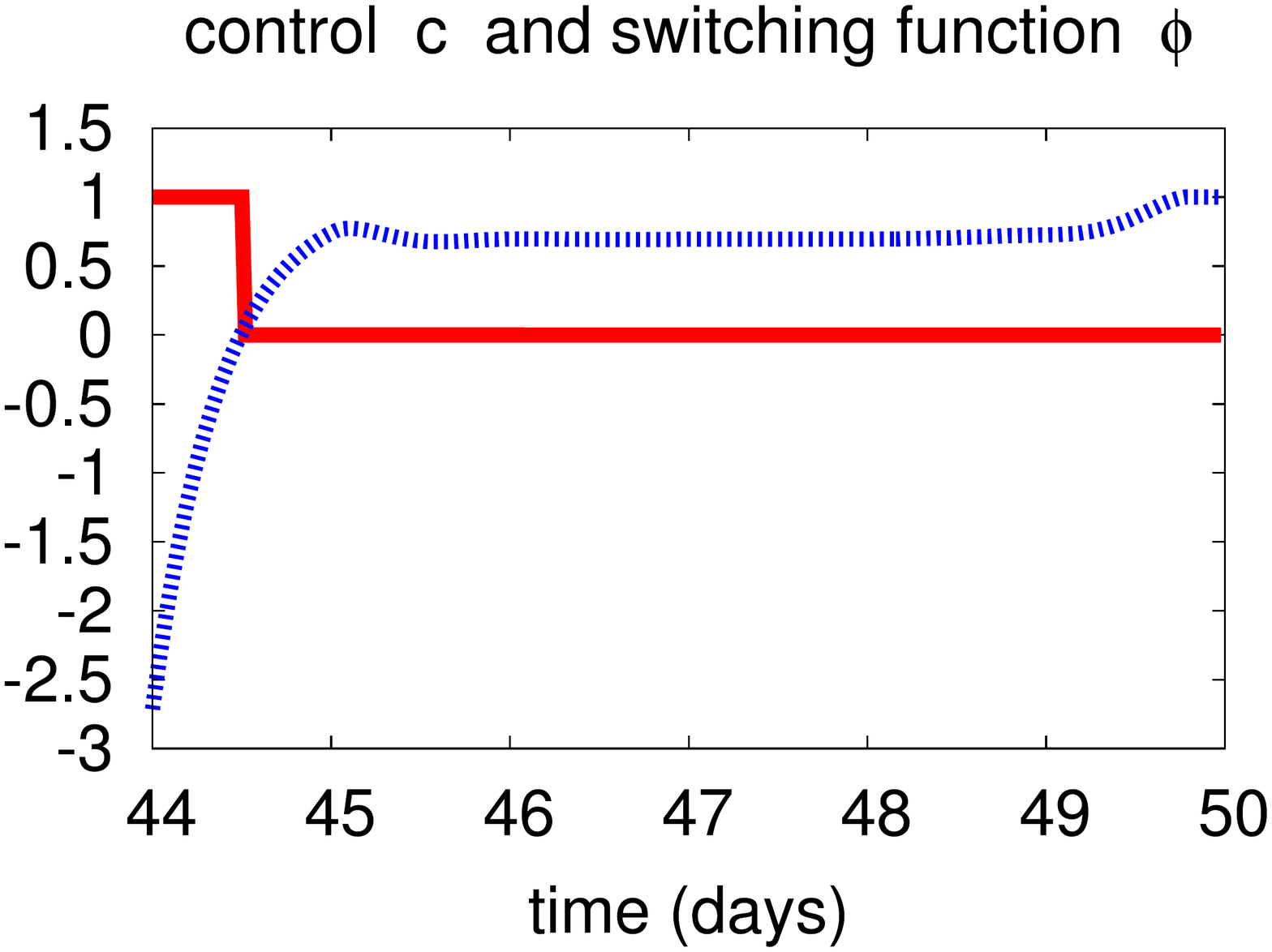}
\end{tabular}
\caption{Bang-bang control $c(t)$ \eqref{control-bang} (continuous curve)
and switching function $\phi$ \eqref{eq:sf} (dashed curve) matching the control 
law \eqref{control-law}: zoom into a neighborhood of the switching time $t_s$.\\ 
(left) Case 1, (middle) Case 2, (right) Case 3}
\label{Fig-control-switching-zoom}
\end{figure}

The state trajectories in the three cases are very similar on the terminal 
time interval $[15,50]$, while the concentration of uninfected cells 
$Z(t)$ is nearly identical on the whole time interval $[0,50]$.
To display the effect of the delays on the state variables $I,V,T$, 
Figure~\ref{state-comparison} shows a comparison of the state 
trajectories in Case 1 (no delays) and Case 3 (state and control delays).
We see that the delay in the control $c$ implies an increase
of the concentration of infected cells $I(t)$ in the first two days
(the delay on the drug effect), which is also responsible for 
an increase on the concentration of the free virus particles $V$ and CTL cells $T$.
\begin{figure}[ht!]
\centering
\begin{tabular}{lll}
\hspace{-10mm}
\includegraphics[width=4.8cm,height=4.2cm]{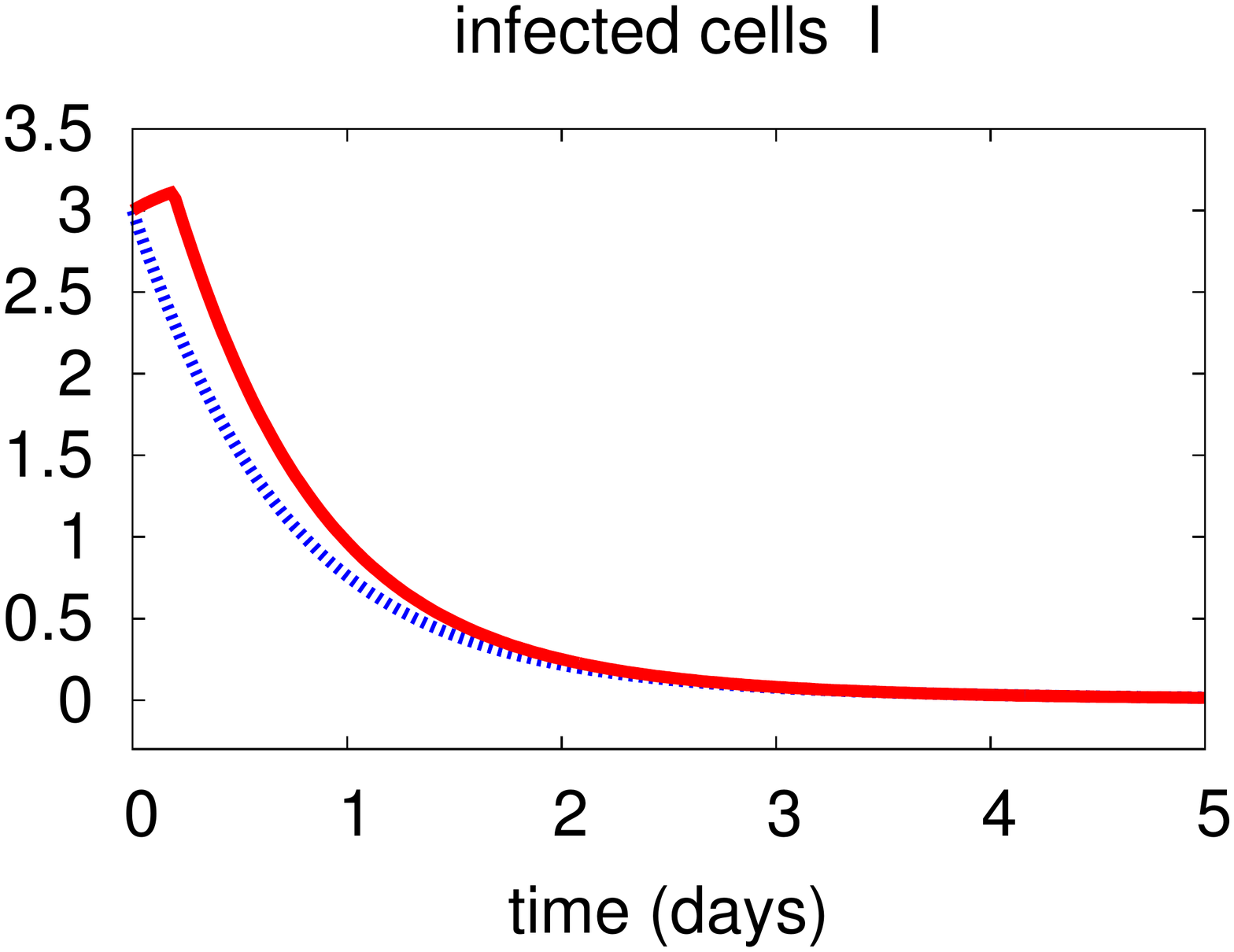}
&  \hspace{-8mm}
\includegraphics[width=4.8cm,height=4.2cm]{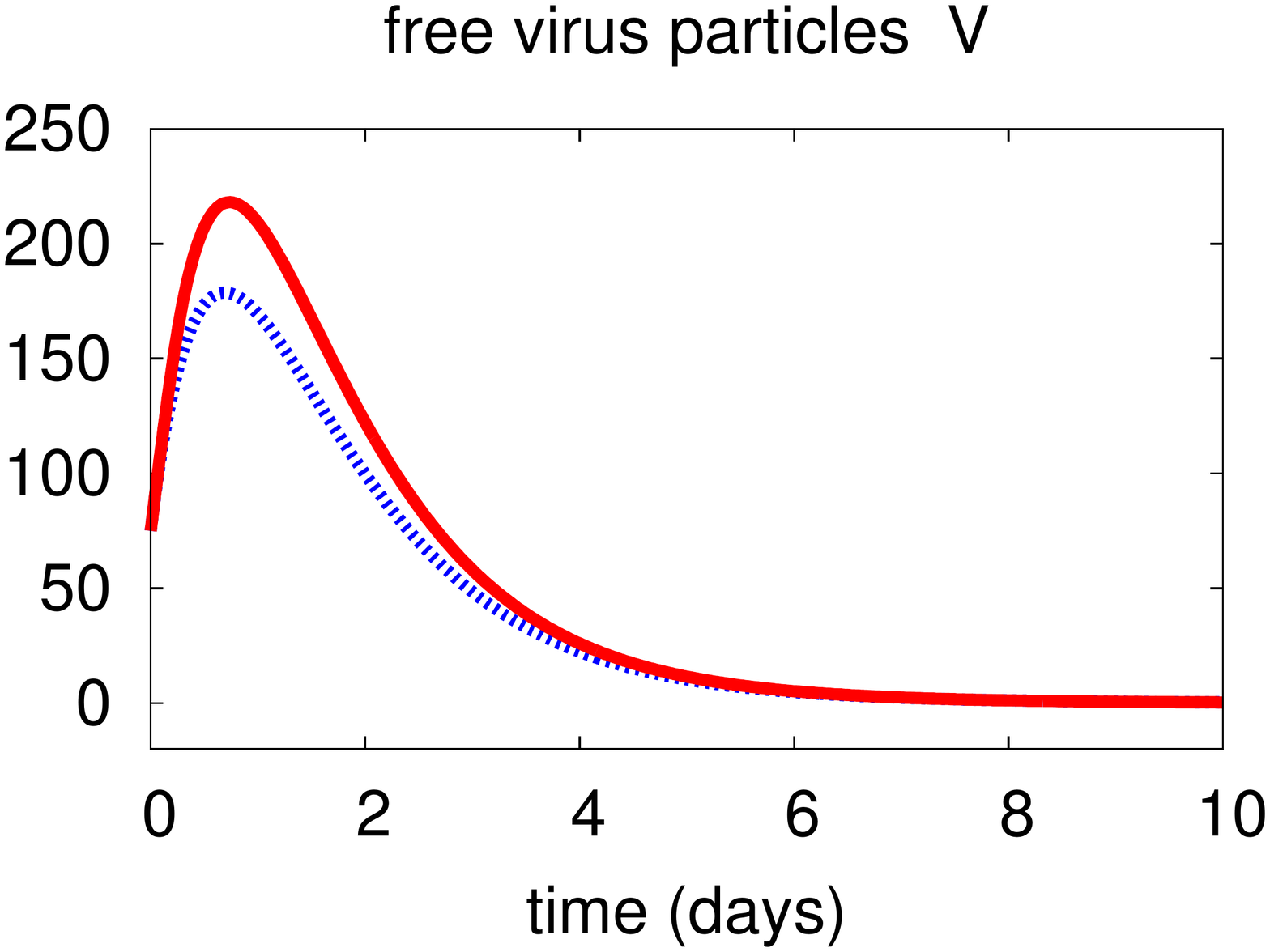}
&  \hspace{-8mm}
\includegraphics[width=4.8cm,height=4.2cm]{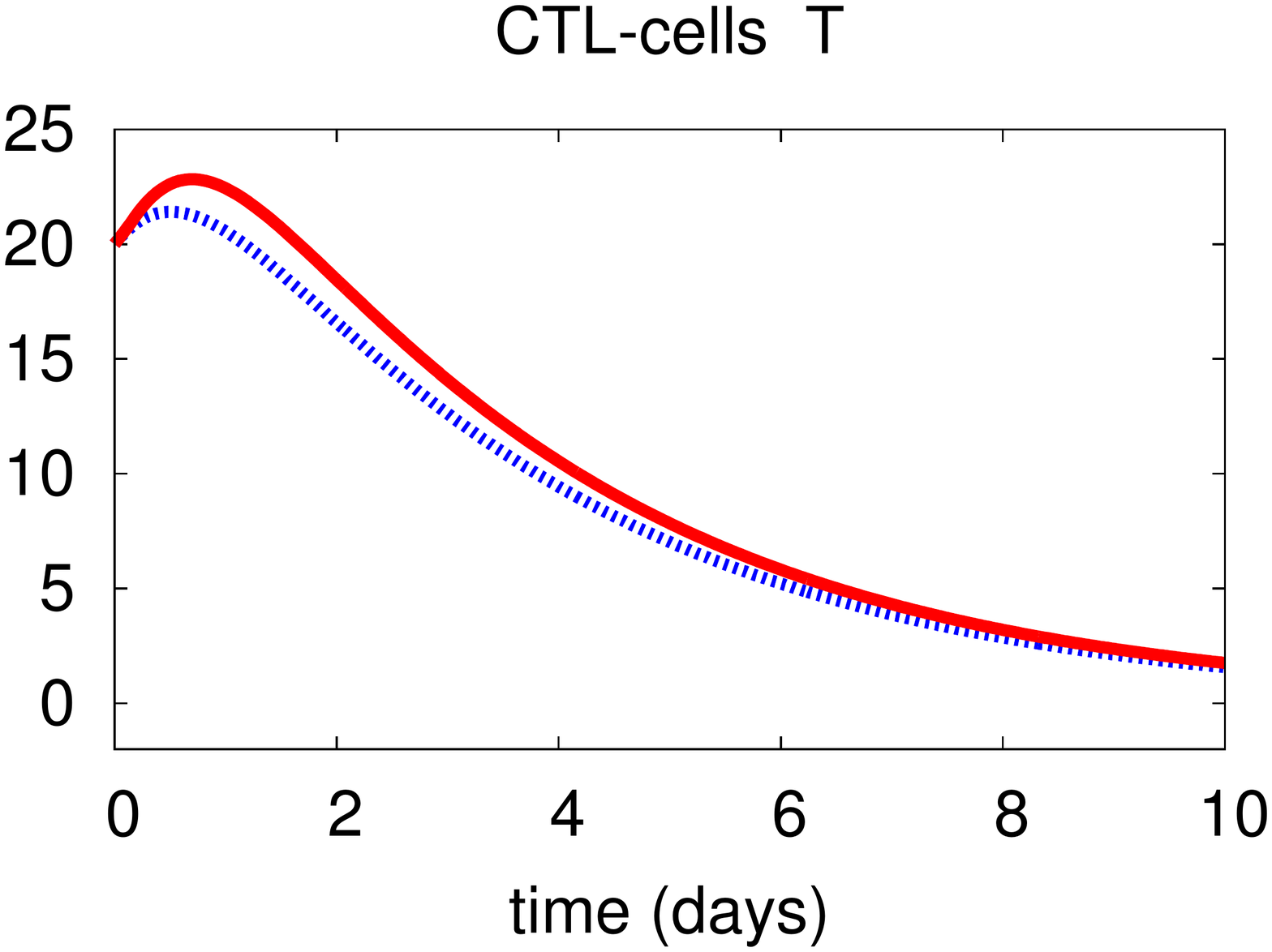}
\end{tabular}
\caption{A comparison of state trajectories in Case 1 (no delays) 
and Case 3 (delays $\tau = 0.5$ and $\xi = 0.2$). 
(left) zoom of infected cells $I(t)$ into $[0,5]$,
(middle) zoom of free virus particles $V(t)$ into $[0,10]$, 
(right) zoom of CTL cells $T(t)$ into $[0,10]$.}
\label{state-comparison}
\end{figure}

The bang-bang controls and the switching functions 
in Figure~\ref{Fig-control-switching-zoom} do not only match the
switching condition \eqref{control-law} but satisfy also the so-called 
\emph{strict bang-bang property} \cite{Osmolovskii-Maurer-Book}  
with respect to the Minimum Principle:
\begin{equation}
\label{strict-bang}
\phi(t) < 0 \quad \mbox{for} \; 0 \leq t < t_s \,, 
\quad \dot\phi(t_s) > 0 , \quad
\phi(t) > 0  \quad \mbox{for} \; t_s < t \leq 50.
\end{equation}
The strict bang-bang property enables us to check  
\emph{second-order sufficient conditions} 
(\textsc{SSC}) for the bang-bang control in the non-delayed Case 1. 
In the delayed Cases 2 and 3, no sufficient conditions are available 
in the literature. In Case 1, we consider the so-called 
Induced Optimization Problem (\textsc{IOP}), 
where the switching time $t_s$ in \eqref{control-bang} is the only optimization variable.
Hence, we optimize the function $J(t_s) = J(c) $ with respect to $t_s$.
The \textsc{IOP} can be solved using the arc-parametrization method
\cite{Maurer-etal-OCAM,Osmolovskii-Maurer-Book} and its implementation
in the optimal control package \textsc{NUDOCCCS} \cite{Bueskens}.
We obtain the highly accurate numerical results
$$
J(c) = 475.1854 , \quad  t_s =  47.090 3, 
\quad J''(t_s) = 5.0962 > 0.
$$
In view of the strict bang-bang property \eqref{strict-bang} 
and the positive second derivative $J''(t_s) = 5.0962 $,
we conclude  from Theorem~7.10 in \cite{Osmolovskii-Maurer-Book} 
that the bang-bang control in Case~1 provides a strict strong minimum. 

Since \textsc{SSC} hold, it follows from the standard sensitivity 
result in finite-dimensional optimization \cite{Fiacco} 
(cf. also \cite{Bueskens-Maurer-sens-1}) that the switching 
time $t_1$ is locally a $C^1$-function with respect to all parameters 
$p$ in the system. The state trajectories are locally $C^1$-functions 
except at the switching time $t_s$. The code \textsc{NUDOCCCS} \cite{Bueskens} 
allows to compute the \emph{sensitivity derivatives} $d t_s /dp$ and 
$dy(50)/dp$ for $y \in \{J(c),Z,I,V\}$ at a nominal parameter value $p_0$. 
The sensitivities $dT(50)/dp$ are very small so that we do not list them here.
Choosing, e.g., the parameter $p \in \{w, r, v\}$, where $w$ 
is the weight parameter in the functional \eqref{costfunction} 
and the parameters $r$ and $v$ are as in Table~\ref{table:parameter},
we get the following sensitivity derivatives at their nominal values
$w_0=1$, $r_0 = 0.0014$ and $v_0 = 1$:
$$
\begin{array}{|c|r|r|r|r|r|}
\hline
\mbox{parameter} & dt_1/dp & dJ(c)/dp & dZ(50)/dp & dI(50)/dp & dV(50)/dp \\  \hline
p = w   &  -0.1962  &  47.09  & -0.03803 & 0.03589 & 0.9464 \\ \hline
p = r   &  1146 &  1063  & -14.63 & 14.73 & 26.89 \\ \hline
p = v   &  -0.5394  &  -428.4  & -0.003583 & 0.003185 & -0.002218 \\ \hline
\end{array}
$$
The sensitivity derivatives quantify our more intuitive
feeling on how the switching time changes under parameter perturbations.
As an example, let us increase the weight parameter $w=1 + \Delta$ 
for the control in the objective $J(c)$ \eqref{costfunction}. 
Then the switching time $t_s(w)$ decreases and has the approximative value  
$$
t_s(w) \approx t_s(1) + \frac{dt_s}{dw} \Delta.
$$
Similar Taylor expansion approximations hold for the other quantities.
It is an interesting exercise to show that the sensitivity derivative 
$dJ(c)/dp$ agrees with $t_s = 47.09$.
Finally, Figure~\ref{fig:statevars:Delay:yes:Control:yes:no} displays 
a comparison of the \emph{controlled} state variables with the 
\emph{uncontrolled} ones in Case~2.
\begin{figure}[htb]
\centering
\subfloat[\footnotesize{Concentration of uninfected target cells $Z$}]{\label{statevar:Z:control}
\includegraphics[width=0.45\textwidth]{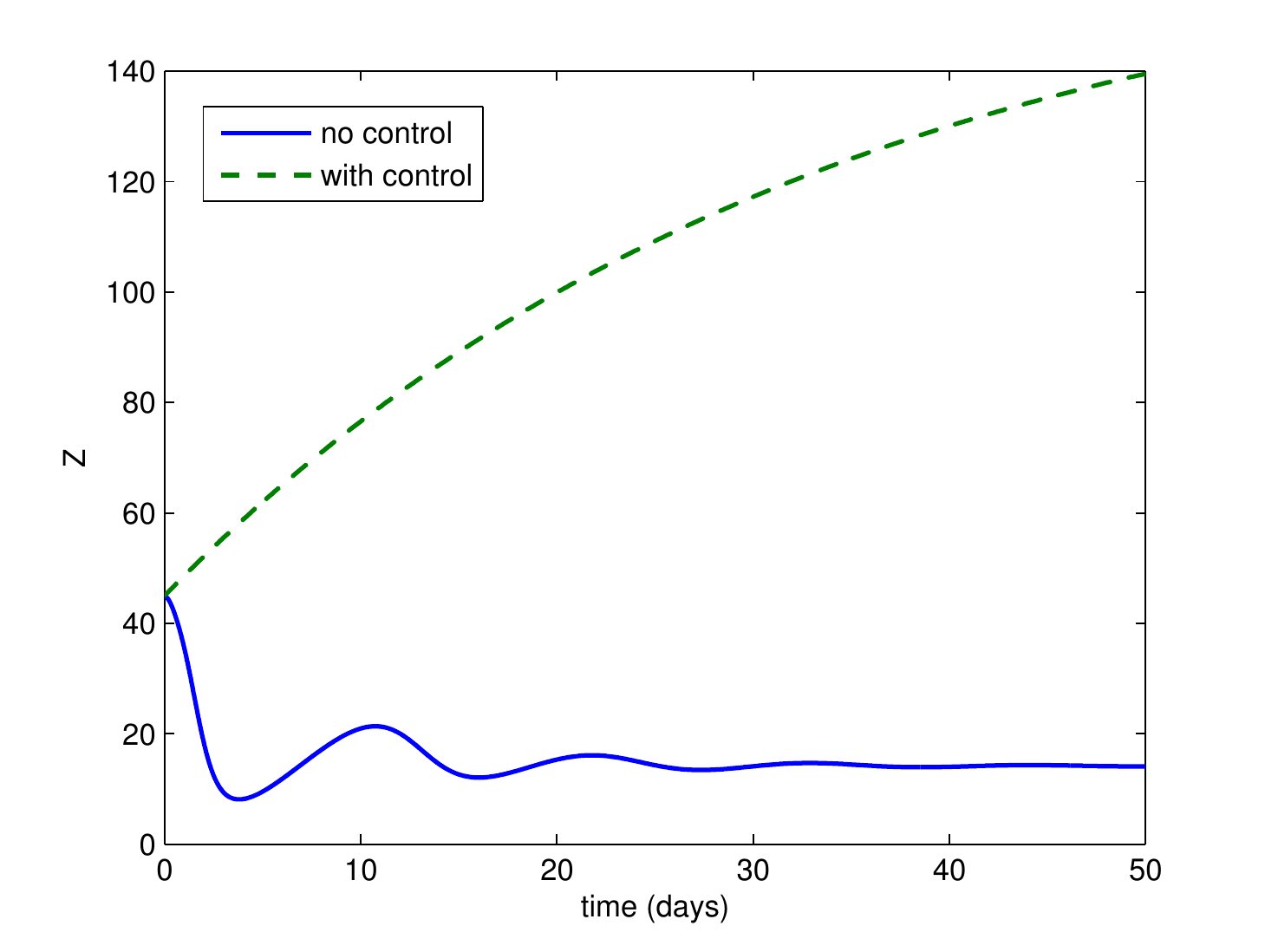}}
\subfloat[\footnotesize{Concentration of infected cells $I$}]{\label{statevar:I:control}
\includegraphics[width=0.45\textwidth]{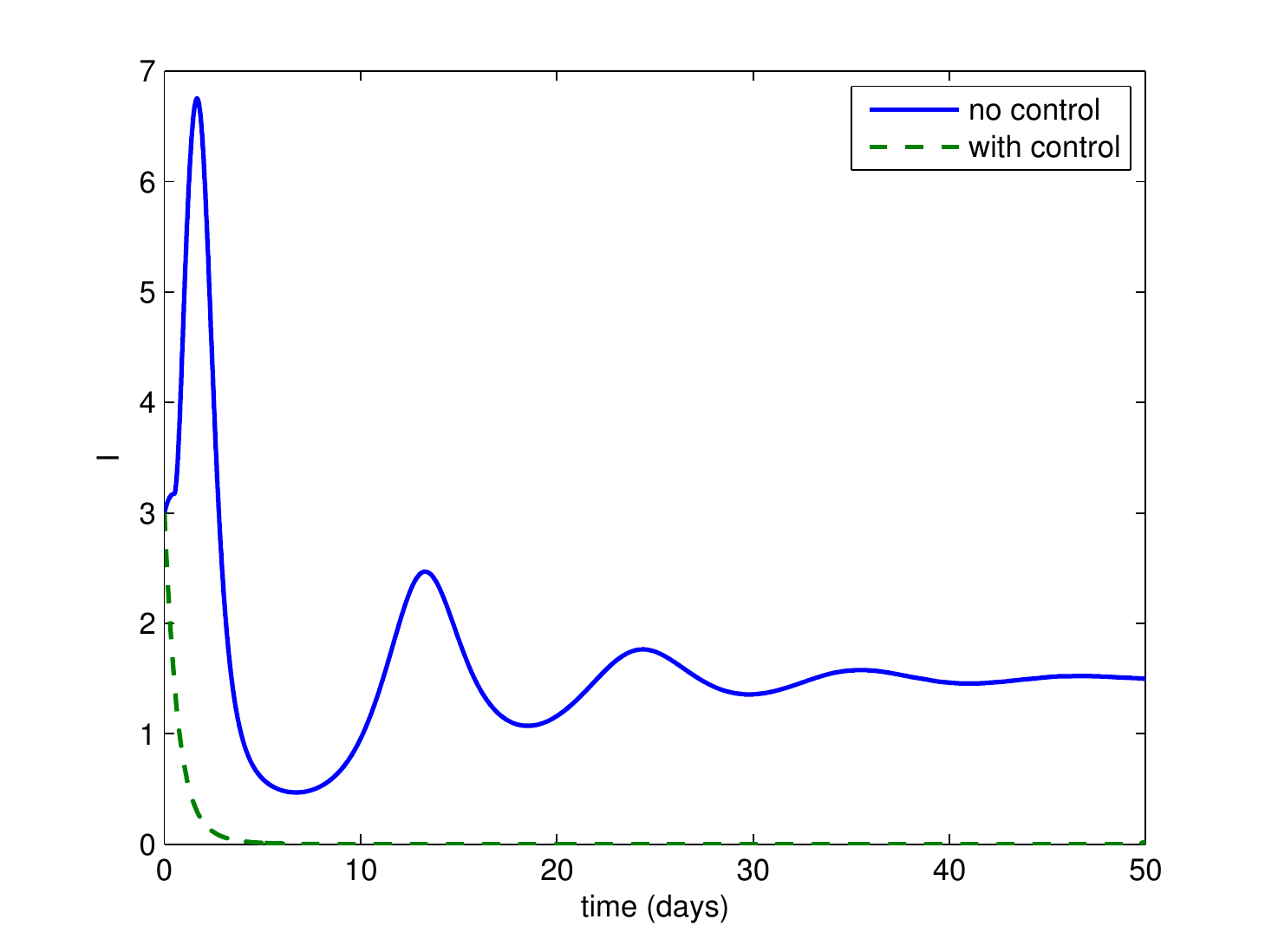}}\\
\subfloat[\footnotesize{Concentration of virus $V$}]{\label{statevar:V:control}
\includegraphics[width=0.45\textwidth]{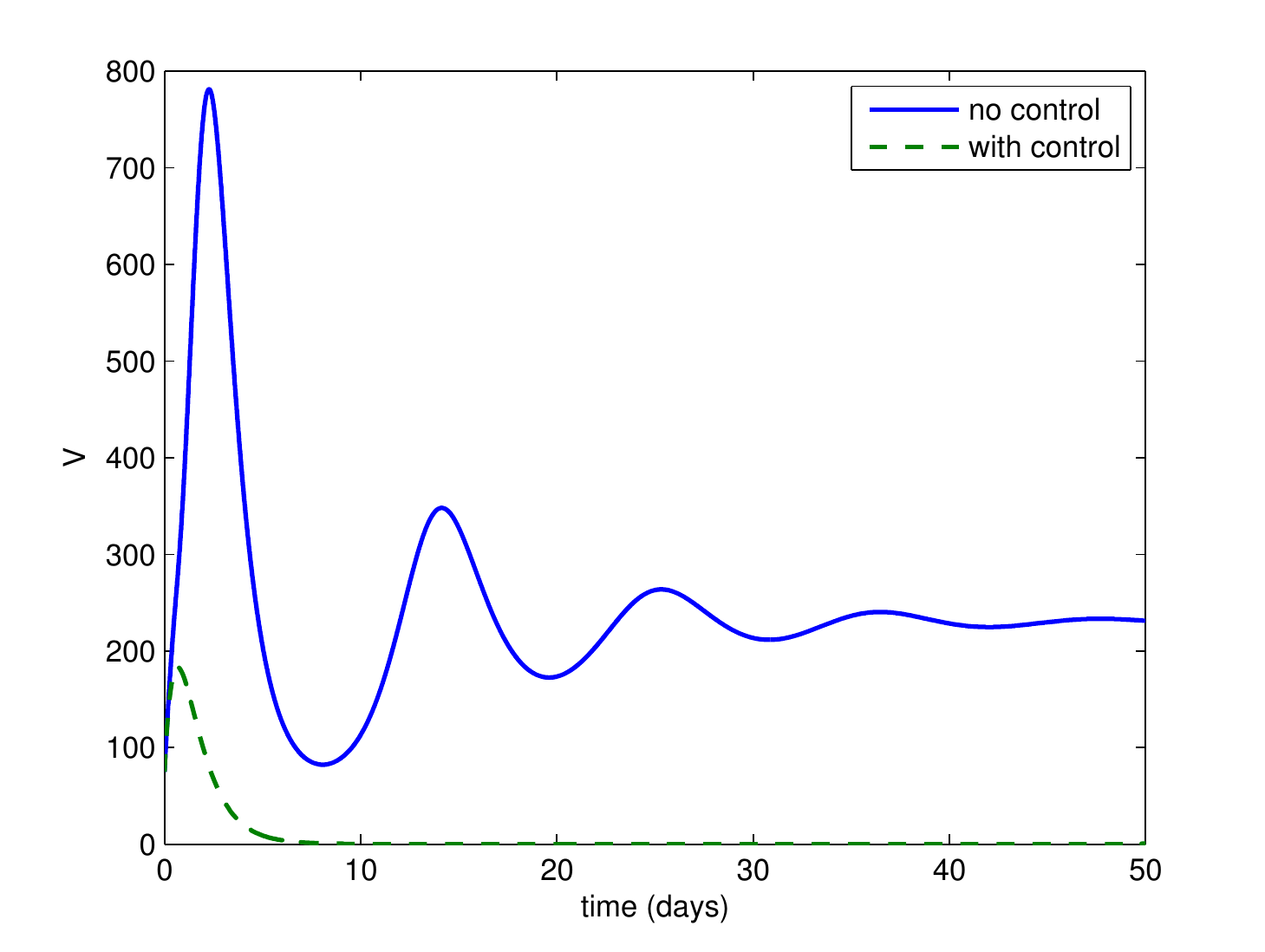}}
\subfloat[\footnotesize{Concentration of CTLs $T$}]{\label{statevar:T:control}
\includegraphics[width=0.45\textwidth]{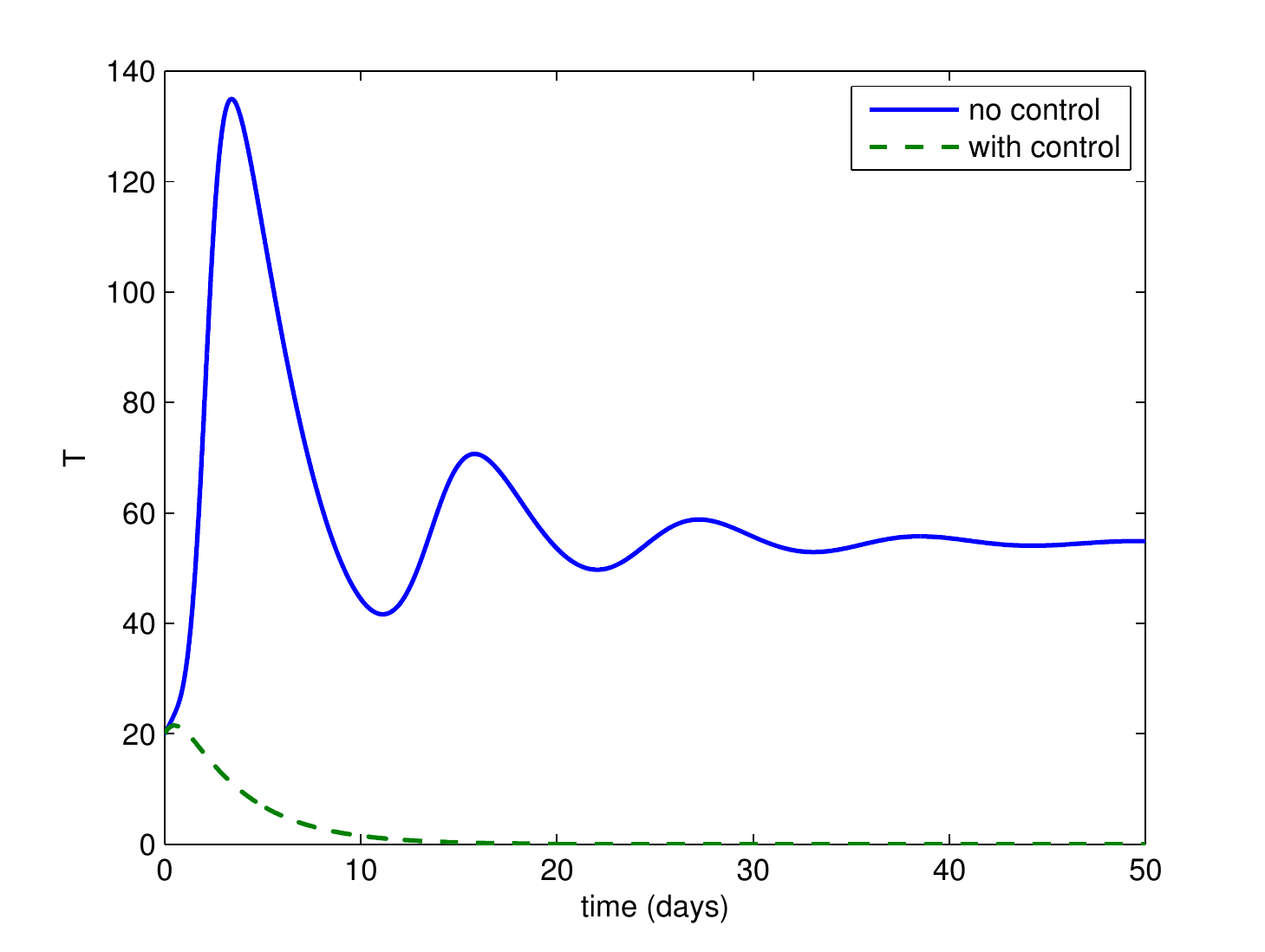}}
\caption{State variables in the case of an intracellular delay only
($\tau = 0.5$ and $\xi=0$): controlled (dashed lines)
versus uncontrolled situations (continuous lines).}
\label{fig:statevars:Delay:yes:Control:yes:no}
\end{figure}

The solutions for the control weight $w=5$ in the cost functional \eqref{costfunction}
are very similar to those for $w=1$. In all three cases, the control 
is bang-bang \eqref{control-bang} with one switching time $t_s$. 
\textsc{IPOPT} \cite{Waechter-Biegler} furnishes the following numerical results:
$$
\hspace{-10mm}
\begin{array}{lll}
\text{Case 1} : &  J(c) = 662.75, &  t_s = 46.77, \\
\text{Case 2} : & J(c) = 650.64,  &  t_s = 44.18, \\
\text{Case 3} : & J(c) = 733.19,  &  t_s = 43.88.
\end{array}
$$
The computed switching functions \eqref{eq:sf} match 
the control law \eqref{control-law} and satisfy 
the strict bang-bang property \eqref{strict-bang}.
\textsc{SSC} can only be verified in the non-delayed Case~1.
\textsc{NUDOCCCS} computes the second derivative $J''(t_s) =  25.43 > 0$.
Hence, the bang-bang control provides a strict strong minimum 
in view of \cite[Theorem~7.10]{Osmolovskii-Maurer-Book}.


\section{Conclusion and discussion}
\label{sec:discussion}

In this paper we have considered not only intracellular delay
(delay $\tau$ in the state variables) as done in the literature
\cite{Haffat:Yousfi:OCHIVdelay:2012}, but also a pharmacological delay
(delay $\xi$ in the control function). The pharmacological delay causes
an increase of the concentration of the infected cells in an initial
interval of time. However, after this increase, related to the delay
in the action of the drugs in the cells, the concentrations of infected cells,
virus and CTL cells associated to the extremal solution of the optimal control
problem with both delays in state and control variables, decrease significantly.
The extremal control is bang-bang and switches from its maximal value one to zero.
This type of control is easier to implement, from a medical point of view, 
when compared to controls found in \cite{Haffat:Yousfi:OCHIVdelay:2012}
for $L^2$ cost functionals. We observe that the extremal control derived 
from the application of Pontryagin's necessary optimality condition 
\cite[Theorem~3.1]{Goellmann-Maurer-14} to our multiple delayed 
optimal control problem, is associated to a marked reduction 
of the concentration of infected cells, virus and CTLs, 
as well as treatment costs, and to an increase
of the uninfected target cells. Sufficient optimality conditions 
could only be checked for the non-delayed solution in Case~1.
In this case, we could also perform a local sensitivity analysis by computing
the sensitivity derivatives. It remains an open and challenging question  
to prove and verify sufficient optimality conditions for delayed bang-bang controls.


\section*{Acknowledgments}

This research was supported by the
Portuguese Foundation for Science and Technology (FCT)
within projects UID/MAT/04106/2013 (CIDMA);
PTDC/EEI-AUT/2933/2014 (TOCCATA),
reference PTDC/EEI-AUT/2933/2014, funded by Project 
3599 -- Promover a Produ\c{c}\~ao Cient\'{\i}fica e Desenvolvimento
Tecnol\'ogico e a Constitui\c{c}\~ao de Redes Tem\'aticas (3599-PPCDT)
and FEDER funds through COMPETE 2020, Programa Operacional
Competitividade e Interna\-ci\-o\-na\-li\-za\-\c{c}\~ao (POCI).
Filipe Rodrigues is also supported by the FCT PhD
fellowship PD/BD/114185/2016; Silva by the FCT post-doc
grant SFRH/BPD/72061/2010. The authors are grateful
to three referees for their valuable
comments and helpful suggestions.



\medskip
Submitted July 12, 2016; revised Nov 26, 2016 and March 1, 2017;\\ accepted Aug 21, 2017.
\medskip


\end{document}